\newtheorem{thm}{Theorem}
\newtheorem{lemma}[thm]{Lemma}
\newtheorem{prop}[thm]{Proposition}
\newtheorem{corollary}[thm]{Corollary}
\theoremstyle{definition}
\newtheorem{defin}[thm]{Definition}
\theoremstyle{remark}
\newtheorem{rem}[thm]{Remark}
\newcommand{\DD}{{\mathbb D}}
\newcommand{\OO}{{\mathcal O}}
\newcommand{\EE}{{\mathbb E}}
\newcommand{\GG}{{\mathbb G}}
\newcommand{\CC}{{\mathbb C}}
\DeclareMathOperator{\Aut}{Aut} \DeclareMathOperator{\id}{id}
\DeclareMathOperator{\ord}{ord}
\DeclareMathOperator{\tr}{tr}
\renewcommand{\phi}{\varphi}
\subjclass[2000]{32F45}
\begin{document}



\title{The Lempert theorem and the tetrablock}

\author{Armen Edigarian}

\address{Institute of Mathematics, Faculty of Mathematics and Computer Science, Jagiellonian University,
\L ojasiewicza 6, 30-348 Krak\'ow, Poland}
\email{{Armen.Edigarian}@im.uj.edu.pl}
\thanks{The work is partially supported by the grant of the Polish Minister for Science and Higher Education
No. N N201 361436.}

\thanks{Part of the paper was prepared while the stay of the Authors at the Erwin Schr\"odinger International Institute for Mathematical Physics in Vienna during the programme: {\it The dbar-Neumann problem: analysis, geometry and potential theory}.}

\author{\L ukasz Kosi\'nski}

\email{{Lukasz.Kosinski}@im.uj.edu.pl}

\author{W\l odzimierz Zwonek}

\email{{Wlodzimierz.Zwonek}@im.uj.edu.pl}




\keywords{Lempert Theorem, complex geodesic, Lempert function, Carath\'eodory distance.}

\begin{abstract}
In the paper we show that the Lempert property (i.e. the equality between the Lempert function and the Carath\'eodory distance) 
holds in the tetrablock, a bounded hyperconvex domain which is not
biholomorphic to a convex domain. The question whether such an equality holds was posed in  \cite{AWY}.
\end{abstract}

\maketitle

\section{Introduction and main results}
The paper may be seen as a direct continuation of the study of the geometry of the tetrablock, a domain introduced recently and then studied in a series of papers (\cite{AWY}, \cite{Abo 2007}, \cite{You 2007}, \cite{EZ}, \cite{Kos 2009}).

The tetrablock naturally appears in control engineering and produces problems
of a function-theoretic character. We denote the {\it tetrablock} by $\EE$ and we define it to be the image of the
Cartan domain of the first type $\mathcal R_I:=\mathcal R_I(2,2)=\{x\in\mathcal M(2\times 2,\CC):||x||<1\}$, 
where $||\cdot||$ denotes the operator norm of matrices, under the mapping
$\pi(x):=(x_{11},x_{22},\det x)$. Note also that $\EE$ is the image under $\pi$ of $\mathcal R_{II}:=\mathcal R_{II}(2,2):=\{x\in \mathcal R_I(2,2):x=x^t\}$ (the Cartan domain of the second type). 

In the paper \cite{AWY}
several equivalent definitions of the domain $\EE$ are given.
Recall two of them
\begin{equation}\label{eq:tetrablock}
\EE=\{(z_1,z_2,z_3)\in\CC^3: |z_2-\bar z_1z_3|+|z_1z_2-z_3|+|z_1|^2<1\}\end{equation} and \begin{equation}\label{eq:tetrablock2}
\EE=\{(z_1,z_2,z_3)\in\CC^3: |z_1-\bar z_2z_3|+|z_2-\bar z_1z_3|+|z_3|^2<1\}.
\end{equation}

It is proven in \cite{AWY} that the equality between the Carath\'eodory distance and the Lempert function of $\EE$ with one of the arguments fixed at the origin,
$$
c_{\EE}(0,\cdot)=\tilde k_{\EE}(0,\cdot)
$$
holds on $\EE$, which suggests that the equality between both functions holds on $\EE\times\EE$. 
The question whether these function are equal was also posed explicitly in \cite{AWY}. 
Since both functions are biholomorphically invariant, we also get immediately the equality 
$$
c_{\EE}(z,\cdot)=\tilde k_{\EE}(z,\cdot) \text{ on }\EE
$$
for any $z\in\{\Psi(0): \Psi\in\Aut (\EE)\}=\{(a,b,ab): a,b\in\DD\}$, where $\Aut(\EE)$ is the set of all
biholomorphisms of $\EE$ (for a description of holomorphic automorphisms of $\EE$ see \cite{You 2007} and \cite{Kos 2009}).

The main purpose of the paper is to show that in fact the above equality holds everywhere in the tetrablock thus solving the problem posed in \cite{AWY}. In other words we prove

\begin{thm}\label{thm:1} The equality $c_{\EE}=\tilde k_{\EE}$ holds.
\end{thm}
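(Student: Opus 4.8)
The plan is to prove the nontrivial inequality $\tilde k_{\EE}\le c_{\EE}$ (the reverse always holds) by constructing, for an arbitrary pair of points $w,w'\in\EE$, a complex geodesic, i.e.\ a holomorphic map $f\colon\DD\to\EE$ with $f(0)=w$, $f(\sigma)=w'$ for some $\sigma\in(0,1)$, together with a left inverse (a holomorphic $F\colon\EE\to\DD$ with $F\circ f=\id_{\DD}$). Existence of such a pair forces $c_{\EE}(w,w')\ge p(0,\sigma)=\tilde k_{\EE}(w,w')\ge c_{\EE}(w,w')$, where $p$ is the Poincar\'e distance. Since the case $w=0$ (and hence, by the automorphism remark in the Introduction, any $w$ of the form $(a,b,ab)$) is already settled in \cite{AWY}, the content is the general case where no automorphism moves $w$ to the origin.

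The first step is to lift the problem to the matrix ball. Using $\EE=\pi(\mathcal R_{II})$ with $\pi(x)=(x_{11},x_{22},\det x)$, I would pick symmetric matrix preimages $x,x'\in\mathcal R_{II}$ of $w,w'$ and invoke the Lempert theorem on the convex domain $\mathcal R_{II}$ (equivalently $\mathcal R_I$, which is convex and even a bounded symmetric domain, so Lempert applies): there is a complex geodesic $\varphi\colon\DD\to\mathcal R_{II}$ through $x$ and $x'$. The key observation is that $\tilde k_{\EE}(w,w')\le \tilde k_{\mathcal R_{II}}(x,x')=p(0,\sigma)$, so it suffices to show $c_{\EE}(w,w')\ge p(0,\sigma)$, i.e.\ to produce a single function $F\in\OO(\EE,\DD)$ with $F(w)=0$, $F(w')=\sigma$. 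Equivalently, using the defining inequality \eqref{eq:tetrablock}, candidates for such extremal functions are the maps
\begin{equation*}
\Phi_\lambda(z_1,z_2,z_3)=\frac{z_3\lambda-z_1}{z_2\lambda-1},\qquad \lambda\in\overline{\DD},
\end{equation*}
which for each fixed $\lambda$ map $\EE$ into $\DD$ (this is exactly the realization of $\EE$ via $2\times2$ matrix inequalities, and it is how the one-argument-at-origin case is handled in \cite{AWY} and \cite{EZ}).

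So the heart of the argument is: given the geodesic $\pi\circ\varphi\colon\DD\to\EE$ through $w,w'$, find $\lambda\in\overline{\DD}$ such that $\Phi_\lambda\circ(\pi\circ\varphi)$ is an automorphism of $\DD$. Writing $\varphi$ explicitly — by the Lempert theory for the ball, $\varphi$ is a matrix M\"obius-type map, so $\pi\circ\varphi=(a(\zeta),b(\zeta),c(\zeta))$ has components that are rational of low degree (Blaschke-type), and one computes $\Phi_\lambda\circ\pi\circ\varphi(\zeta)=\frac{c(\zeta)\lambda-a(\zeta)}{b(\zeta)\lambda-1}$. One must choose $\lambda$ so that this is a Blaschke factor; a natural strategy is to match boundary behaviour — since $\pi\circ\varphi$ sends $\partial\DD$ into $\partial\EE$ and the stated formula shows $|\Phi_\lambda|\le1$ on $\EE$ with equality characterizing a piece of $\partial\EE$, for a.e.\ boundary point there is a $\lambda$ of modulus one making $|\Phi_\lambda|=1$; the real work is showing this $\lambda$ can be taken constant along $\partial\DD$, which should follow from the rigidity of the explicit rational form of $\varphi$ together with the structure of $\partial\EE$ (the Shilov boundary of $\EE$ is one-dimensional). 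I expect this last point — pinning down the single good $\lambda$ and verifying $F=\Phi_\lambda$ is a genuine left inverse of the geodesic rather than merely $|F|=1$ on the boundary — to be the main obstacle, requiring a careful case analysis of the possible forms of complex geodesics of $\mathcal R_{II}$ (distinguishing, e.g., whether $\varphi$ is affine or a genuine M\"obius map, and whether the extremal disc is contained in a flat boundary component of $\mathcal R_I$). Once $F$ is in hand, $c_{\EE}(w,w')\ge p(0,\sigma)\ge\tilde k_{\EE}(w,w')$ closes the loop and Theorem~\ref{thm:1} follows.
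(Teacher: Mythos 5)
Your plan has the right general flavour (lift to the matrix ball, use Lempert there, then find a left inverse built from the rational functions defining $\EE$), but it runs in the wrong direction and this creates a genuine gap. You fix preimages $x,x'\in\mathcal R_{II}$ of $w,w'$, take a geodesic $\varphi$ of $\mathcal R_{II}$ through them, and then try to produce $F\in\OO(\EE,\DD)$ with $F\circ\pi\circ\varphi$ an automorphism. Such an $F$ can only exist if $\tilde k_{\EE}(w,w')=\tilde k_{\mathcal R_{II}}(x,x')$, i.e.\ if $\pi\circ\varphi$ is already an extremal disc for $(w,w')$ in $\EE$ -- and nothing in your argument guarantees this for your (essentially arbitrary) choice of symmetric preimages; if $\tilde k_{\mathcal R_{II}}(x,x')>\tilde k_{\EE}(w,w')$ then $c_{\EE}(w,w')\le\tilde k_{\EE}(w,w')<p(0,\sigma)$ and the inequality you set out to prove is simply false, so the strategy cannot close. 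The paper avoids this by going the other way: it starts from a $\tilde k_{\EE}$-extremal $f$ (existence by tautness) and \emph{lifts} it. When $f(\DD)\cap\mathcal T=\varnothing$ the lift exists because $\pi|_{\mathcal R_{II}\setminus\pi^{-1}(\mathcal T)}$ is a covering; when $f(\DD)$ meets $\mathcal T$ the covering argument breaks down (square roots of $f_1f_2-f_3$ need not exist) and a nontrivial lemma (Lemma~\ref{lem:lift}) is needed, which produces a lift into $\mathcal R_I$ -- not $\mathcal R_{II}$ -- after peeling off zeros and automorphisms. Your proposal does not address the choice-of-preimage problem or the branching of $\pi$ over $\mathcal T$, and these are precisely the points where the real difficulty sits.

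A second gap is your restriction of candidate left inverses to the one-parameter family $\Phi_\lambda(z)=(z_3\lambda-z_1)/(z_2\lambda-1)$. In the paper this family (the functions $\Psi_z$ of \cite{AWY}, together with the variable-swapped ones) suffices only in special subcases (e.g.\ $Z(\lambda)=\lambda$, where the problem reduces to the symmetrized bidisc); in the remaining cases -- in particular the whole case $f(\DD)\cap\mathcal T=\varnothing$ -- the left inverse of $f$ is \emph{not} of this rational form. There the paper multiplies two components of $f$ by a Blaschke factor, shows the modified disc is a geodesic with explicit left inverse $(z_3-z_2)/(z_1-1)$ via a Schwarz--Pick computation, and then invokes Theorem~\ref{thm:gen} (the balanced-domain/Rouch\'e argument) to produce an implicitly defined left inverse for $f$ itself. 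Your "match a constant $\lambda$ along the boundary" step, which you yourself flag as the main obstacle, corresponds in the paper to Lemma~\ref{lem:boundofE} plus the Blaschke-product identities \eqref{eq:B1B2}--\eqref{eq:B}; even there the output is only a set of constraints on the parameters of the lifted geodesic, followed by a case analysis, not directly a left inverse from the family $\Phi_\lambda$. So as written the proposal is a plausible programme whose two decisive steps (extremality of the projected disc for the right preimages, and existence of a left inverse within your rational family) are unproved and, in the second case, not true in the generality you need.
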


Since the tetrablock is a hyperconvex domain (and thus taut), in order to prove the above theorem it is equivalent to show that
for any $\tilde k_{\EE}$-extremal $f$ there is a function $F\in\OO(\EE,\DD)$ such that
$F\circ f=\id_{\DD}$ where $\DD$ denotes the unit disc in $\CC$. And this is what we actually do.

Recall that the fundamental Lempert theorem (see \cite{Lem 1981}, \cite{Lem 1982}) states that the Lempert function and the Carath\'eodory
distance coincide on convex domains (and thus on domains that may 
be exhausted by domains biholomorphic to convex ones). Nevertheless, very little was known about the converse theorem in a reasonable class of domains (e.g. in the class of bounded and pseudoconvex domains).
A few years ago C.~Costara, J.~Agler and N.~J.~Young showed that the Lempert theorem would hold in the symmetrized bidisc
(see \cite{Cos 2004}, \cite{Cos 2004b}, \cite{Agl-You 2004}) which is neither biholomorphic to a convex domain
(see \cite{Cos 2004b}) nor can be exhausted by domains biholomorphic to convex ones (see \cite{Edi 2004}).
The symmetrized bidisc is a domain in $\CC^2$ denoted by $\GG_2$ and given by \begin{equation}\label{eq:sym} \GG_2:=\{(s,p)\in \CC^2:\ |s-\bar sp|+|p^2|<1\}.\end{equation} For the basic properties of $\GG_2$ we refer the reader to \cite{Agl-You 2004}.

Following the ideas in the papers \cite{Cos 2004b} and \cite{Edi 2004} we show that the tetrablock has the same properties.

\begin{thm}\label{thm:convex} $\EE$ cannot be exhausted by domains biholomorphic to convex ones.
\end{thm}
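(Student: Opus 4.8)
The plan is to mimic the strategy of Edigarian \cite{Edi 2004} for the symmetrized bidisc, which shows that a domain cannot be exhausted by domains biholomorphic to convex ones by exhibiting a point at which a certain ``convexity-type'' invariant fails. The key tool is the following observation: if $D$ could be exhausted by domains biholomorphic to convex ones, then the Lempert function, the Carath\'eodory distance, and in particular the associated infinitesimal objects (the Kobayashi--Royden and Carath\'eodory--Reiffen metrics) would all coincide, \emph{and moreover} the extremal discs through a given point would be unique in a strong sense inherited from the convex case. One exploits the second-order (curvature/convexity) information: on a convex domain the boundary behaviour of geodesics is rigid, and this rigidity is preserved under biholomorphism and under exhaustion. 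Concretely, I would look at a boundary point of $\EE$ and analyse the local geometry near it.

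First I would recall the description of $\partial\EE$ and the structure of the distinguished boundary $b\EE$ from \cite{AWY} and \cite{EZ}, together with the automorphism group of $\EE$ from \cite{You 2007} and \cite{Kos 2009}, so as to reduce, by homogeneity on the set $\{(a,b,ab):a,b\in\DD\}$, to studying the geometry near a single conveniently chosen boundary point (e.g. a point of the form $(\lambda,\bar\lambda,1)$ or a flat piece of the boundary). Second, I would identify an analytic disc in $\partial\EE$ (the tetrablock, like $\GG_2$, contains non-trivial analytic discs in its boundary) and examine whether a neighbourhood of a point on such a disc is biholomorphic to a piece of the boundary of a convex domain. Third — following \cite{Cos 2004b} and \cite{Edi 2004} — I would use a Liouville-type / rigidity argument: if $\EE$ were exhausted by $D_j$ biholomorphic to convex domains $\Omega_j$, the exhausting maps would converge (after normalisation, using tautness of $\EE$) to give, in the limit, a biholomorphism-like structure forcing $\EE$ locally to look convex near that boundary disc; then I would derive a contradiction with an explicit computation showing the relevant boundary portion of $\EE$ is genuinely non-convex in a biholomorphically invariant way (for instance, the family of complex geodesics attached to the boundary disc behaves in a way incompatible with convexity, or an appropriate invariant — such as the one built from the defining functions in \eqref{eq:tetrablock}, \eqref{eq:tetrablock2} — takes a value that is impossible on convex domains).

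The main obstacle, I expect, is the limiting/normalisation step: one must control the sequence of biholomorphisms $\EE \supset D_j \to \Omega_j$ (convex) well enough near the relevant boundary point to extract a meaningful limit object, since the $\Omega_j$ may degenerate (escape to infinity, flatten, etc.). In the $\GG_2$ case this was handled by a careful choice of normalisation together with the explicit knowledge of geodesics and the automorphism group; for $\EE$ one needs the analogous explicit data. I would therefore lean heavily on the known formulas for geodesics and extremals in $\EE$ (from \cite{EZ}, \cite{Kos 2009}, and the results used in proving Theorem~\ref{thm:1}) to pin down the limit and to perform the final non-convexity computation. A secondary, more technical obstacle is verifying that the candidate boundary point and boundary disc are positioned so that the invariant one computes is actually sensitive to convexity — i.e. choosing the right ``test configuration'' — but this should follow by direct analogy with the symmetrized bidisc, since \eqref{eq:sym} embeds naturally into the geometry of \eqref{eq:tetrablock} via the map $(z_1,z_2,z_3)\mapsto(z_1+z_2,z_3)$ (or a similar slice), transporting the known non-convexity of $\GG_2$ into $\EE$.
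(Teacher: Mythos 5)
Your proposal cites the right source (\cite{Edi 2004}) but does not contain the mechanism that makes that argument work, and the route you actually sketch breaks down at exactly the points where the content lies. The paper's proof is not a boundary-rigidity or normalized-limit-of-biholomorphisms argument. It exploits the quasi-balanced structure of $\EE$: set $\rho(z):=\max\|(\pi|_{\mathcal R_{II}})^{-1}(z)\|$, a plurisubharmonic function with $\EE=\{\rho<1\}$ and $\rho(\lambda z_1,\lambda z_2,\lambda^2 z_3)=|\lambda|\,\rho(z)$. Assuming $f_{\epsilon}$ maps a neighbourhood of $\{\rho\le 1-\epsilon\}$ biholomorphically onto a convex domain, normalised by $f_{\epsilon}(0)=0$, $f_{\epsilon}'(0)=\id$, one forms, for fixed $w,z$ and $r\in[0,1]$, the disc $g_{\epsilon}(\lambda)=f_{\epsilon}^{-1}\bigl(rf_{\epsilon}(\phi_{\lambda}(w))+(1-r)f_{\epsilon}(\phi_{\lambda}(z))\bigr)$, where $\phi_{\lambda}(z)=(\lambda z_1,\lambda z_2,\lambda^2 z_3)$, then divides out the dilation by passing to $h_{\epsilon}(\lambda)=\phi_{1/\lambda}(g_{\epsilon}(\lambda))$ (holomorphic across $0$ because the third component of $g_{\epsilon}$ vanishes to second order), and applies the maximum principle to the subharmonic function $\rho\circ h_{\epsilon}$ to bound $\rho(h_{\epsilon}(0))$. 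The point $h_{\epsilon}(0)$ equals $rw+(1-r)z$ up to an error in the third coordinate controlled by the three second-order Taylor coefficients of $(f_{\epsilon})_3$ at $0$; these are shown to tend to $0$ by plugging in a few explicit test pairs $(w,z)$. In the limit one obtains $\rho(rw+(1-r)z)\le\max\{\rho(w),\rho(z)\}$ for all $w,z$, i.e.\ convexity of $\EE$, a contradiction. Convexity of the model is used only to form the convex combination inside it; no control of the convex domains, no limit of biholomorphisms, and no boundary analysis is needed --- which is precisely how the ``degeneration'' obstacle you flag is avoided.

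Concrete gaps in your route: (i) the invariant you hope to contradict cannot be the Lempert property or coincidence of invariant metrics, since Theorem~\ref{thm:1} of this very paper proves $c_{\EE}=\tilde k_{\EE}$; an argument against exhaustion must detect something strictly finer, and your proposal never names it. (ii) The uniqueness of extremal discs ``inherited from the convex case'' is not available: it already fails for non-strictly convex domains, and there is no reason it should pass to exhaustions. (iii) Transporting the non-convex-exhaustibility of $\GG_2$ into $\EE$ via a slice or the map $(z_1,z_2,z_3)\mapsto(z_1+z_2,z_3)$ does not work: if $\EE$ were exhausted by domains biholomorphic to convex ones, the intersections of these domains with the slice $\{z_1=z_2\}$ need not be biholomorphic to convex domains, because the biholomorphisms need not respect the slice; so no contradiction with \cite{Edi 2004} is obtained this way. (iv) The normalisation/limit step, which you yourself identify as the main obstacle, is left unsolved --- and it is exactly the step the actual proof is designed to circumvent by working with the intrinsic plurisubharmonic functional $\rho$ and only three scalar Taylor coefficients.
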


The above theorems show that the tetrablock is the second example of that kind.
Recall also that the symmetrized bidisc is a $\CC$-convex domain (see \cite{Nik-Pfl-Zwo 2008}) 
Therefore it is natural to pose the question whether $\EE$ is biholomorphic
to a $\CC$-convex domain (or even more, whether it can be exhausted by domains biholomorphic to $\CC$-convex domains).
And we also may repeat other open questions concerning the $\CC$-convexity. Does the Lempert theorem hold for any bounded $\CC$-convex domain
(see \cite{Zna 2001})? In fact, since the Lempert Theorem holds for all $C^2$-smooth $\CC$-convex domains (see \cite{Jac 2006}),
the positive answer to this question would follow from the positive solution of another problem from \cite{Zna 2001}:
Can any bounded $\CC$-convex domain be exhausted by $C^2$-smooth $\CC$-convex domains?

The tetrablock is an example of a bounded $(1,0,1)$-balanced and $(0,1,1)$-balanced pseudoconvex domain;
recall that if $m_1,\ldots,m_n$ are non-negative integers (to avoid trivialities we assume that at least one of $m_j$'s is non-zero)
then a domain $D\subset\CC^n$ is called 
\emph{$(m_1,\ldots,m_n)$-balanced} if for any $z\in D$ and $\lambda\in\bar\DD$ the point $(\lambda^{m_1}z_1,\ldots,\lambda^{m_n}z_n)$ 
lies in $D$. In the proof of Theorem~\ref{thm:1} we make use of the following result that has a more general formulation 
and is interesting for its own.

\begin{thm}\label{thm:gen}
Let $D$ be an $(m_1,\ldots,m_n)$ balanced pseudoconvex domain.
Assume that $\psi$ is a complex geodesic in $D$ and $\psi(\lambda)=(\lambda^{m_1}\phi_1(\lambda),\ldots,\lambda^{m_n}\phi_n(\lambda))$, $\lambda\in\DD,$ for some $\varphi_j$ holomorphic on $\mathbb D,$ $j=1,\ldots,n$.

Then $\phi\in\OO(\DD,\partial D)$ or $\phi$ is a complex geodesic in $D$.
\end{thm}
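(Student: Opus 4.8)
The plan is to exploit the characterization of complex geodesics via the existence of a left inverse together with the rigidity forced by the balanced structure. Recall that $\psi$ is a complex geodesic in $D$ means $\psi\in\OO(\DD,D)$ and there is $F\in\OO(D,\DD)$ with $F\circ\psi=\id_\DD$; equivalently, by Lempert's theorem machinery, $\psi$ realizes the Kobayashi distance between any two of its values. Write $m:=\min(m_1,\dots,m_n)$ after discarding the (harmless) possibility that $\phi$ is not even valued in $\overline{D}$; the dichotomy to be proved is that either the candidate map $\phi=(\phi_1,\dots,\phi_n)$ already lands in the boundary $\partial D$, or it is itself extremal.

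First I would set up the two natural rescaling maps associated to an $(m_1,\dots,m_n)$-balanced domain: for $\lambda\in\DD$ the map $z\mapsto(\lambda^{m_1}z_1,\dots,\lambda^{m_n}z_n)$ sends $D$ into $D$, and it contracts the Carath\'eodory/Kobayashi distances. The key point is that $\psi(\lambda)$ is obtained from $\phi(\lambda)$ by applying exactly this rescaling with parameter $\lambda$. The hypothesis that $\psi$ is a geodesic then says that this one-parameter family of contractions, composed with $\phi$, is as non-contracting as possible. I would make this quantitative: fix $\lambda_0\in\DD$, consider the function $c_D(\psi(\lambda),\psi(\lambda_0))$, and compare it on one hand with $p(\lambda,\lambda_0)$ (the Poincar\'e distance, which it equals since $\psi$ is a geodesic), and on the other hand with $c_D(\phi(\lambda),\phi(\lambda_0))$ via the distance-decreasing property of the rescaling. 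This forces $c_D(\phi(\lambda),\phi(\lambda_0))\geq p(\lambda,\lambda_0)$, hence $\phi$ is either a complex geodesic or maps into $\partial D$ — but to separate these cases cleanly I need that $\phi$ does not wander in and out of $D$ and $\partial D$.

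The main obstacle, and the step I would spend the most care on, is precisely this dichotomy: a priori $\phi$ could have some values inside $D$ and some on $\partial D$. Here I would use that $D$ is pseudoconvex, so $\log\dist(\cdot,\partial D)$ (or a suitable negative plurisubharmonic exhaustion) composed with $\phi$ is subharmonic on $\DD$; if $\phi$ touches $\partial D$ at one interior point of $\DD$ then the maximum principle pins it to $\partial D$ identically, otherwise $\phi\in\OO(\DD,D)$ entirely. In the latter case the inequality $c_D(\phi(\lambda),\phi(\lambda_0))\geq p(\lambda,\lambda_0)$, combined with the always-valid reverse inequality $c_D\le \tilde k_D\le p$ along any holomorphic disc, gives equality, so $\phi$ is a complex geodesic. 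I would then verify that the pull-back of the left inverse $F$ for $\psi$ — precomposed appropriately with the rescaling, or extracted as a weak limit of $\lambda\mapsto$(suitable automorphisms of $\DD$ applied to $F(\lambda^{m_1}\phi_1,\dots))$ — furnishes an honest left inverse for $\phi$, completing the argument; this normal-families limiting step is where one must check the limit does not degenerate to a constant, again using the distance estimate just established.
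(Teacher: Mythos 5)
There is a genuine gap, and it sits exactly at the heart of the argument. You claim that $p(\lambda,\lambda_0)=c_D(\psi(\lambda),\psi(\lambda_0))\le c_D(\phi(\lambda),\phi(\lambda_0))$ ``via the distance-decreasing property of the rescaling''. But $\psi(\lambda)$ and $\psi(\lambda_0)$ arise from $\phi(\lambda)$ and $\phi(\lambda_0)$ by \emph{two different} rescalings, namely $z\mapsto(\lambda^{m_1}z_1,\ldots,\lambda^{m_n}z_n)$ and $z\mapsto(\lambda_0^{m_1}z_1,\ldots,\lambda_0^{m_n}z_n)$; there is no single holomorphic self-map of $D$ carrying the pair $(\phi(\lambda),\phi(\lambda_0))$ to $(\psi(\lambda),\psi(\lambda_0))$, so the contraction property of $c_D$ yields nothing here. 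The inequality $c_D(\phi(\lambda),\phi(\lambda_0))\ge p(\lambda,\lambda_0)$ is essentially equivalent to the conclusion of the theorem, so it cannot be asserted; it must be produced. Your closing suggestion does not produce it either: ``$F$ precomposed with the rescaling'' is ambiguous (with which parameter?), and the family ``automorphisms of $\DD$ applied to $F(\lambda^{m_1}\phi_1,\ldots)$'' is not a family of functions on $D$ at all --- along the disc that expression is identically $\lambda$, so there is nothing to take a normal-families limit of.

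What is needed, and what the paper does, is an explicit construction of a left inverse of $\phi$ from the left inverse $F$ of $\psi$. Assuming $\phi\in\OO(\DD,D)$, fix $z\in D$ and consider $\lambda\mapsto \lambda-F(\lambda^{m_1}z_1,\ldots,\lambda^{m_n}z_n)$, which is holomorphic on a neighborhood of $\bar\DD$ (balancedness) and satisfies $|F(\lambda^{m_1}z_1,\ldots,\lambda^{m_n}z_n)|<1=|\lambda|$ on $\partial\DD$; by Rouch\'e it has exactly one zero $G(z)\in\DD$. The map $G$ is holomorphic because its graph $\{(z,\lambda)\in D\times\DD: F(\lambda^{m_1}z_1,\ldots,\lambda^{m_n}z_n)=\lambda\}$ is an analytic set, and $G\circ\phi=\id_\DD$ because $F(\psi(\lambda))=\lambda$. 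This Rouch\'e/implicit-function step is the key idea missing from your proposal; without it (or an equivalent device) your plan does not close. A smaller point: for the dichotomy $\phi(\DD)\subset\partial D$ or $\phi(\DD)\subset D$, the function $\log\dist(\cdot,\partial D)$ is not the right tool (it is not defined, let alone usable for a maximum-principle argument, at parameters where $\phi$ meets $\partial D$); the natural argument uses the plurisubharmonic Minkowski-type functional $\mathfrak h$ of the balanced pseudoconvex domain, for which $\log\mathfrak h(\phi(\lambda))=\log\mathfrak h(\psi(\lambda))-\log|\lambda|$ is subharmonic and $\le 0$ on $\DD$, hence either everywhere negative or identically zero.
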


Note that one may formulate Theorem~\ref{thm:gen} replacing the geodesics with $\tilde k$-extremals - in this case the proof is immediate.

\bigskip

\textbf{Acknowledgments} The authors express their thanks to the referee for her/his valuable
suggestions which essentially improved the quality of the paper.

\section{Definitions, preliminary considerations and proof of Theorem~\ref{thm:gen}}
First we recall basic definitions of the considered notions. 
The basic properties in the theory of holomorphically invariant functions may be found in \cite{Jar-Pfl 1993}.

For a domain $D\subset\CC^n$, $w,z\in D$, we define the {\it Lempert function}
\begin{equation}
\tilde k_D(w,z):=\inf\{p(\lambda_1,\lambda_2):\text{ there is } f\in\OO(\DD,D), f(\lambda_1)=w,f(\lambda_2)=z\},
\end{equation}
where $p$ denotes the Poincar\'e distance on $\DD$.

If $w\neq z$, then any $f$ as in the definition of $\tilde k_D$ such that $\tilde k_D(w,z)=p(\lambda_1,\lambda_2)$ 
is called a {\it $\tilde k_D$-extremal for $(w,z)$} (or shortly {\it extremal}).

We also define {\it the Carath\'eodory (pseudo)distance} by
\begin{equation}
c_D(w,z):=\sup\{p(F(w),F(z)):F\in\OO(D,\DD)\}.
\end{equation}

It is obvious that $c_D\leq \tilde k_D$. The Lempert Theorem states that if $D$ is convex then $\tilde k_D=c_D$.

The idea of the proof of Theorem~\ref{thm:1} is the following. We take any $\tilde k_{\EE}$-extremal $f$
(and the existence of extremal for any pair of different points from $\EE$ follows from the tautness of $\EE$)
and we show that there is a left inverse to $f$,
i.e. a mapping $F\in\OO(\EE,\DD)$ such that $F\circ f$ is an automorphism
(without loss of generality we may require the automorphism to be the identity). In such a case the mapping $f$ is called a
{\it complex geodesic}.
There will be two kinds of possible choices of $F$ depending on the location of the image of $f$.
Either the image of $f$ intersects $\mathcal T:=\{z\in\CC^3:z_1z_2=z_3\}$ or it is disjoint from $\mathcal T$.
In the second case we can lift the extremal (using the fact that $\pi|_{\mathcal R_{II}\setminus\pi^{-1}(\mathcal T)}$ 
is a holomorphic covering onto $\EE\setminus\mathcal T$). In the first case we cannot use the lifting
coming from the holomorphic covering. Nevertheless, making use of the explicit form of the covering 
we may lift the extremal to the extremal lying
in $\mathcal R_I$. In both cases all the extremals (which are automatically complex geodesics because of the convexity of 
$\mathcal R_I$ and $\mathcal R_{II}$) are known.
So we have a form of possible extremals. Now the left inverse will be of two possible forms. 
Either the one considered in several papers in the case the extremal passes through $\mathcal T$
(see \cite{AWY} and \cite{EZ}) or a function obtained from that in a way described in a more general situation 
in the proof of Theorem~\ref{thm:gen}. Therefore, we start with the proof of that theorem.

\begin{proof}[Proof of Theorem \ref{thm:gen}]
It is clear that $\phi\in\OO(\DD,\partial D)$ or $\phi\in\OO(\DD,D)$. Assume that the second case holds. 
Let $F\in\OO(D,\DD)$ be such that $F\circ\psi=\id_{\DD}$.

We claim that for any $z=(z_1,\ldots,z_n)\in D$ there is exactly one $\lambda=\lambda(z)\in\DD$ such that
$F(\lambda^{m_1}z_1,\ldots,\lambda^{m_n}z_n)=\lambda$. 
In fact, fix $z\in D$ and consider two functions defined on a neighborhood of $\bar\DD$: 
$\lambda\to F(\lambda^{m_1}z_1,\ldots,\lambda^{m_n}z_n)$ and $\lambda\to\lambda$. 
Since $|F(\lambda^{m_1}z_1,\ldots,\lambda^{m_n}z_n)|<
1=|\lambda|$ for all $\lambda\in\partial\DD$, the Rouch\'e theorem implies that the function 
$\DD\owns\lambda\to\lambda-F(\lambda^{m_1}z_1,\ldots,\lambda^{m_n}z_n)$ has
exactly one root in $\DD$ which finishes the proof of our claim. 
This allows us to define well a function $G:D\mapsto\DD$ with $G(z):=\lambda(z)$, $z\in D$.

Since the graph of $G$ which is equal to
\begin{equation}
\{(z,\lambda)\in D\times\DD:F(\lambda^{m_1}z_1,\ldots,\lambda^{m_n}z_n)=\lambda\}
\end{equation}
is an analytic set (for the notion of analytic sets we refer the reader to \cite{Loj 1991}) we get that $G$ is holomorphic (see e.g. \cite{Loj 1991}, Chapter~V, \S~1).
Moreover, it follows from the definition that $G\circ\phi(\lambda)=\lambda$, $\lambda\in\DD$,
which finishes the proof.
\end{proof}

\section{Proof of Theorem 1 -- the case $f(\DD)\cap\mathcal T\neq\varnothing$.}
Let $\Phi_a(x)=(1-aa^*)^{-\frac{1}{2}}(x-a)(1-a^*x)^{-1}(1-a^*a)^{\frac{1}{2}},$ $a,x\in\mathcal R_I.$ It is known (see e.g. \cite{Bas}) that $\Phi_a\in \Aut(\mathcal R_I),$ $\Phi_a(0)=-a$ and $\Phi_a(a)=0.$ If additionally $a,x$ are symmetric, then $\Phi_a(x)$ is symmetric as well. Therefore, $\Phi_a\in \Aut(\mathcal R_{II}),$ $a\in \mathcal R_{II}.$

It follows from \cite{Kos 2009} that for any $\psi\in \Aut(\mathbb E)$ there is a $\Phi\in\Aut(\mathcal R_{II})$ such that \begin{equation}\label{eq:aut} \psi\circ \pi(x) = \pi\circ \Phi(x),\quad x\in \mathcal R_{II}.\end{equation} It is easy to observe that \begin{equation}\label{eq:nowphi} \Phi=U\Phi_a U^t\end{equation} for some $a=\left( \begin{array}{cc} a_1 & 0 \\  0 & a_2 \\ \end{array} \right),$ $a_1,a_2\in\mathbb D,$ and $U=\left(\begin{array}{cc} e^{i\theta} & 0 \\0 & e^{i\eta} \\ \end{array} \right)$ or $U=\left(\begin{array}{cc} 0 & e^{i\theta} \\ e^{i\eta} &  0\\ \end{array} \right),$ $\theta,\eta\in \mathbb R.$ Direct computations show that the equality (\ref{eq:aut}) remains valid on $\mathcal R_I,$ i.e. \begin{equation}\label{eq:autsec} \psi\circ \pi(x) = \pi\circ \Phi(x),\quad x\in \mathcal R_I.\end{equation}

Note also that it follows from (\ref{eq:aut}) that all automorphisms of $\EE$ extend holomorphically onto a neighborhood of $\bar\EE$.

\bigskip

Put $\tilde c=\left(\begin{array}{cc} 0 & 0 \\ c & 0 \\ \end{array} \right),$ where $c\in \mathbb D.$ Let us denote \begin{equation}\label{eq:phic}\varphi_c(x):=\Phi_{\tilde c}(x)=\left( \begin{array}{cc} \sqrt{1-|c|^2}\frac{x_{11}}{1-\overline c x_{21}} & \frac{x_{12}+\overline c\det x}{1-\overline cx_{21}} \\ \frac{x_{21}-c}{1-\overline c x_{21}} & \sqrt{1-|c|^2} \frac{x_{22}}{1- \overline c x_{21}} \\ \end{array} \right),\quad x=(x_{ij})\in \mathcal R_I.
\end{equation} Note that $$\det\varphi_c(x)=\frac{\det x+ cx_{12}}{1- \overline c x_{21}}.$$

We start with the following observation:
\begin{lemma}\label{lem:shilov}
 Let $f:\mathbb D\to \partial \mathbb E$ be an analytic disc. If $f(\mathbb D)\cap \mathcal T\neq \varnothing,$ then $f(\mathbb D)\subset \mathcal T.$
\end{lemma}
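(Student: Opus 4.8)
The plan is to argue directly from the description \eqref{eq:tetrablock}. Write $\rho(z):=|z_2-\bar z_1z_3|+|z_1z_2-z_3|+|z_1|^2$, a continuous function on $\CC^3$ with $\EE=\{\rho<1\}$; then $\partial\EE\subset\{\rho=1\}$, so $\rho(f(\lambda))=1$ for every $\lambda\in\DD$. Inspecting \eqref{eq:tetrablock} and \eqref{eq:tetrablock2} one sees that $\EE\subset\DD^3$, hence $|f_1|,|f_2|,|f_3|\le 1$ on $\DD$. The first step is to dispose of the degenerate case in which $f_1$ is a unimodular constant $e^{i\theta}$: then $\rho(f)\equiv 1$ reads $|f_2-e^{-i\theta}f_3|+|e^{i\theta}f_2-f_3|\equiv 0$, which already forces $f_3\equiv e^{i\theta}f_2=f_1f_2$, i.e. $f(\DD)\subset\mathcal T$. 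So from now on $f_1$ is not a unimodular constant, and by the maximum principle $|f_1|<1$ on all of $\DD$.

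Next I would exploit the hypothesis. Fix $\lambda_0$ with $f(\lambda_0)\in\mathcal T$, so $f_3(\lambda_0)=f_1(\lambda_0)f_2(\lambda_0)$. Substituting this into $\rho(f(\lambda_0))=1$, the first term becomes $|f_2(\lambda_0)|(1-|f_1(\lambda_0)|^2)$ and the second vanishes, so
\[
|f_2(\lambda_0)|\,(1-|f_1(\lambda_0)|^2)+|f_1(\lambda_0)|^2=1 ,
\]
and since $|f_1(\lambda_0)|<1$ this gives $|f_2(\lambda_0)|=1$. As $|f_2|\le 1$ on $\DD$, the maximum principle yields $f_2\equiv e^{i\sigma}$ for a real constant $\sigma$.

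The last step is a short computation. With $f_2\equiv e^{i\sigma}$, put $w:=e^{-i\sigma}f_3$ (so $|w|\le 1$); then $\rho(f)\equiv 1$ becomes
\[
|1-\bar f_1 w|+|f_1-w|\equiv 1-|f_1|^2 .
\]
Using the identity $|1-\bar ab|^2-|a-b|^2=(1-|a|^2)(1-|b|^2)$ with $A:=|1-\bar f_1 w|$, $B:=|f_1-w|$, the two relations $A+B=1-|f_1|^2$ and $A^2-B^2=(1-|f_1|^2)(1-|w|^2)$ give $A-B=1-|w|^2$, hence $B=\tfrac{1}{2}(|w|^2-|f_1|^2)$. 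Combining $B\ge |w|-|f_1|\ge 0$ with this formula forces $(|w|-|f_1|)(|w|+|f_1|-2)\ge 0$; since $|w|\le 1$ and $|f_1|<1$ the second factor is negative, so $|w|=|f_1|$, whence $B=0$ and $w\equiv f_1$. Thus $f_3\equiv e^{i\sigma}f_1=f_1f_2$, i.e. $f(\DD)\subset\mathcal T$.

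I do not expect a genuine obstacle here: the only things to be careful about are the trivial inclusions $\partial\EE\subset\{\rho=1\}$ and $\EE\subset\DD^3$, the observation (the \emph{crux} of the argument) that touching $\mathcal T$ at an interior point forces $|f_2(\lambda_0)|=1$ and hence $f_2$ constant, and the sign bookkeeping in the last display. One could instead lift $f$ through $\pi$ to an analytic disc in $\partial\mathcal R_{II}$ and appeal to the known structure of analytic discs in the Shilov boundary of a Cartan domain; but the lift need not behave well over $\mathcal T$, so the direct argument above seems both shorter and cleaner.
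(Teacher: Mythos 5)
Your proof is correct and follows essentially the same route as the paper: evaluate the boundary identity $|f_2-\bar f_1f_3|+|f_1f_2-f_3|=1-|f_1|^2$ at a point of $f^{-1}(\mathcal T)$ to force $|f_1(\lambda_0)|=1$ or $|f_2(\lambda_0)|=1$, and then use the maximum principle to make that coordinate a unimodular constant. The only difference is in the second case: the paper reduces it to the first via the $z_1\leftrightarrow z_2$ symmetry of $\EE$ (its ``without loss of generality''), where the identity immediately gives $f_1f_2\equiv f_3$, whereas you settle $f_2\equiv e^{i\sigma}$ by a direct computation with $|1-\bar ab|^2-|a-b|^2=(1-|a|^2)(1-|b|^2)$ --- correct, just slightly longer.
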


\begin{proof}
Using (\ref{eq:tetrablock}) we get \begin{equation}\label{eq:bound}|f_2-\overline{f_1}f_3|+|f_1f_2-f_3|=1-|f_1|^2.\end{equation} Let $\lambda_0$ be such that $f(\lambda_0)\in \mathcal T.$ Then $|f_2(\lambda_0)-\overline{f_1(\lambda_0)}f_3(\lambda_0)|=1-|f_1(\lambda_0)|^2.$ Using the equality $f_1(\lambda_0)f_2(\lambda_0)=f_3(\lambda_0)$ again we infer that $$|f_2(\lambda_0)|(1-|f_1(\lambda_0)^2|)=1-|f_1(\lambda_0)^2|,$$ whence $|f_1|\equiv 1$ or $|f_2|\equiv 1$ (recall that $|f_i|\leq 1,$ $i=1,2,3$). Assume without loss of generality that $|f_1|\equiv 1$. Making use of (\ref{eq:bound}) we find that $f_1f_2=f_3.$
\end{proof}

\begin{defin}
For a holomorphic mapping $f:\mathbb D\to \mathbb E$ put $$\nu(f)(\lambda)=\ord_{\lambda} (f_1f_2-f_3),\quad \lambda\in \mathbb D.$$
\end{defin}

\begin{rem}\label{rem:nu}
Note that $f(\mathbb D) \subset  \mathbb E\setminus\mathcal T$ if and only if $\nu(f)\equiv 0.$ Moreover, $\nu$ is invariant under automorphisms of the tetrablock, i.e.
\begin{equation}\label{eq:invofnu} \nu(f)\equiv \nu(\varphi\circ f),\quad \varphi\in \Aut(\mathbb E).
\end{equation} Actually, it follows from \eqref{eq:aut} that there is an automorphism $\Phi$ of $\mathcal R_{II}$ such that $\varphi(\pi(x))=\pi(\Phi(x))$ for $x\in \mathcal R_{II}.$ Moreover, $\Phi$ is of the form \eqref{eq:nowphi}. Direct calculations show that $\varphi_1(x) \varphi_2(x) - \varphi_3(x)= (x_1x_2- x_3)e^{2i(\eta+\theta)}(1-|a_1|^2)(1-|a_2|^2) (1-\bar a_1 x_1-\bar a_2 x_2+\bar a_1 \bar a_2 x_3)^{-2},$ $x=(x_1,x_2,x_3)\in \mathbb E$, where $\eta,\theta$ and $a$ are as in \eqref{eq:nowphi}. Since $1-\bar a_1 x_1-\bar a_2 x_2+\bar a_1 \bar a_2 x_3=\det(1-a^* y),$ where $y\in \mathcal R_{II}$ is such that $\pi(y)=x,$ we see that the function $ x\mapsto 1-\bar a_1 x_1-\bar a_2 x_2+\bar a_1 \bar a_2 x_3$ does not vanish on $\mathbb E.$ This immediately gives \eqref{eq:invofnu}.
\end{rem}

\begin{lemma}\label{lem:lift}
 Let $f:\mathbb D \to \mathbb E$ be a holomorphic disc such that $f^{-1}(\mathcal T)\neq \varnothing.$ Then there is a holomorphic disc $F:\mathbb D\to \overline{\mathcal R_I}$ such that $f=\pi\circ F.$

Moreover, one of two following possibilities holds:

(a) $F(\mathbb D)\subset \mathcal R_I,$

(b) there is an automorphism $\varphi$ of the tetrablock and a holomorphic mapping $\psi:\mathbb D\to \mathbb D$ such that $f(\lambda)= \varphi ((0,0,\psi(\lambda))),$ $\lambda\in\mathbb D.$
\end{lemma}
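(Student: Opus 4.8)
The plan is to use the explicit covering structure recalled at the beginning of this section. Recall that $\pi$ restricted to $\mathcal R_{II}\setminus\pi^{-1}(\mathcal T)$ is a holomorphic covering onto $\EE\setminus\mathcal T$, and that $\mathcal T=\{z_1z_2=z_3\}$ meets $\EE$ precisely along $\pi(\mathcal R_{II}\cap\{x_{12}=0\})$ (equivalently, $\pi$ of the matrices whose off-diagonal part vanishes). Write $\nu:=\nu(f)$ for the divisor of $f_1f_2-f_3$ on $\DD$. Since $f^{-1}(\mathcal T)\neq\varnothing$ but $f(\DD)\not\subset\mathcal T$ (otherwise $f$ would not map into $\EE$), the set $f^{-1}(\mathcal T)$ is discrete and $\nu$ is a nonzero, locally finite divisor. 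The idea is: on the complement $\DD\setminus f^{-1}(\mathcal T)$ we may lift $f$ through the covering $\pi|_{\mathcal R_{II}\setminus\pi^{-1}(\mathcal T)}$ — here we need $\DD\setminus f^{-1}(\mathcal T)$ to be simply connected, which fails in general, so we instead lift along paths starting from a fixed base point, or better, we produce the lift by a direct formula. The cleanest route is to write the lift explicitly: given $f=(f_1,f_2,f_3):\DD\to\EE$, look for $F=\begin{pmatrix} F_{11} & F_{12}\\ F_{21} & F_{22}\end{pmatrix}$ with $F_{11}=f_1$, $F_{22}=f_2$, $\det F=f_3$, so that $F_{12}F_{21}=f_1f_2-f_3$. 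The freedom here is exactly a holomorphic factorization of $f_1f_2-f_3$ into $F_{12}F_{21}$; since $f_1f_2-f_3$ has divisor $\nu\geq 0$ on the disc, such a factorization (e.g. $F_{12}=f_1f_2-f_3$, $F_{21}=1$, or splitting the zeros) exists on all of $\DD$, giving a holomorphic $F:\DD\to\calm(2\times 2,\CC)$ with $\pi\circ F=f$. It remains to choose the factorization so that $F$ lands in $\overline{\mathcal R_I}$.

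Next I would control the operator norm. A $2\times 2$ matrix $F$ lies in $\overline{\mathcal R_I}$ iff $\|F\|\le 1$. Using the identity for the operator norm of a $2\times 2$ matrix (via its singular values, i.e. the eigenvalues of $F^*F$), the condition $\|F(\lambda)\|\le 1$ for $\lambda\in\DD$ translates into pointwise inequalities among $|F_{ij}(\lambda)|$ and $|\det F(\lambda)|=|f_3(\lambda)|$. The point is that $f(\DD)\subset\EE$ together with the defining inequality \eqref{eq:tetrablock} already forces, at every $\lambda$, enough room that a suitable choice of the factorization $F_{12}F_{21}=f_1f_2-f_3$ (balancing the moduli of $F_{12}$ and $F_{21}$, e.g. taking $F_{12},F_{21}$ with $|F_{12}|=|F_{21}|$ off the zero set) yields $\|F(\lambda)\|<1$ on $\DD$ or $\|F(\lambda)\|=1$ somewhere. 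Here is where the dichotomy appears. If this optimal choice of $F$ maps $\DD$ into the open ball $\mathcal R_I$, we are in case (a). If instead $\|F(\lambda_0)\|=1$ for some $\lambda_0$, then $F(\lambda_0)$ is in the Shilov boundary of $\mathcal R_I$; combined with $f(\lambda_0)\in\partial\EE\cap\mathcal T$ (forced because a boundary point of $\mathcal R_I$ maps under $\pi$ into $\partial\EE$, and by Lemma~\ref{lem:shilov} once $f$ touches $\partial\EE$ along $\mathcal T$ it stays there if it touches $\mathcal T$ again), one gets strong rigidity.

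The main obstacle, and the heart of the argument, is extracting case (b) from the situation where $F$ cannot be pushed into the open ball. In that case $f(\DD)$ must be largely forced onto $\partial\EE$, and I would argue as follows: after possibly composing with an automorphism $\varphi\in\Aut(\EE)$ — using \eqref{eq:aut}, \eqref{eq:nowphi}, \eqref{eq:autsec} to realize it as $\pi\circ\Phi$ with $\Phi\in\Aut(\mathcal R_I)$, and using the invariance of $\nu$ from Remark~\ref{rem:nu} — one can normalize so that a zero of $f_1f_2-f_3$ sits at a prescribed point and $f$ there equals a distinguished boundary point of the form $(0,0,\ast)$. Then the factorization $F_{12}F_{21}=f_1f_2-f_3$ together with the norm-$1$ constraint pins down $F$: one of $f_1,f_2$ must vanish identically (as in the proof of Lemma~\ref{lem:shilov}), forcing $f_1\equiv f_2\equiv 0$ after the normalization, so $f\equiv(0,0,\psi)$ for a holomorphic $\psi:\DD\to\DD$ (the bound $|\psi|<1$ coming from $(0,0,\psi(\lambda))\in\EE$, i.e. from \eqref{eq:tetrablock}). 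Undoing the automorphism gives $f(\lambda)=\varphi\big((0,0,\psi(\lambda))\big)$, which is exactly (b). The bookkeeping — keeping track of which factorization is forced and verifying the operator-norm inequalities stay sharp only where expected — is the technical crux; I expect the norm computation for $2\times 2$ matrices and its interaction with \eqref{eq:tetrablock} to be where the real work lies.
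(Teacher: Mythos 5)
Your reduction of the lifting problem to a factorization $F_{11}=f_1$, $F_{22}=f_2$, $F_{12}F_{21}=f_1f_2-f_3$ is the right starting point, but the step on which everything hinges --- choosing the factorization so that $\|F(\lambda)\|\le 1$ on $\DD$, ``e.g.\ taking $F_{12},F_{21}$ with $|F_{12}|=|F_{21}|$'' --- is exactly where the genuine difficulty of the lemma sits, and your proposal does not overcome it. Pointwise, the balanced choice $|F_{12}|=|F_{21}|$ is indeed the norm-minimizing one (this is the content of Lemma~\ref{lem:boundofE}), but to have it \emph{holomorphically} on all of $\DD$ you need $F_{12}=\omega F_{21}$ with a unimodular constant $\omega$, i.e.\ a global holomorphic square root of $f_1f_2-f_3$. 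Such a square root exists only when every zero of $f_1f_2-f_3$ has even order; since by hypothesis $f^{-1}(\mathcal T)\neq\varnothing$, odd-order zeros are precisely the case one must handle, and for a generic unbalanced factorization (say $F_{21}\equiv 1$) there is no norm control at all. The paper's proof is built around this obstruction: it introduces $\nu(f)=\ord(f_1f_2-f_3)$, uses transitivity of $\Aut(\EE)$ on the triangular points together with the $(1,0,1)$- and $(0,1,1)$-quasi-circular structure to write $f=(\lambda^n g_1,\lambda^m g_2,\lambda^{n+m}g_3)$ with $\nu(g)(0)<\nu(f)(0)$, and iterates until $g_1g_2-g_3$ is zero-free, at which point the symmetric (square-root) lift $G\in\overline{\mathcal R_{II}}$ exists and the genuinely non-symmetric lift $F=\operatorname{diag}(\lambda^n,\lambda^m)\,G$ has $\|F\|\le\|G\|\le 1$ automatically (norm control by multiplying with a diagonal contraction, not by balancing moduli); infinitely many zeros are then treated by exhausting $\DD$ by $(1-1/n)\DD$ and a normal-families limit. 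None of these ideas (the order function, the induction via automorphisms, the monomial splitting of rows and columns, the limiting argument) appears in your sketch, so the claimed existence of a norm-$\le 1$ lift is unproved.

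Two further points. First, your parenthetical ``$f(\DD)\not\subset\mathcal T$ (otherwise $f$ would not map into $\EE$)'' is false: $\mathcal T\cap\EE=\{(a,b,ab):a,b\in\DD\}$ is nonempty, and discs inside it are perfectly admissible (they are in fact the easy case, lifted by the diagonal matrix $\operatorname{diag}(f_1,f_2)$, landing in case (a)). Second, your derivation of alternative (b) is only a heuristic: in the paper (b) emerges concretely from the limit lift $g$ after normalizing $f(0)=0$, where $g(0)$ is forced to be an antidiagonal matrix with single entry $c\in\bar\DD$; if $|c|=1$ one reads off $f=(0,0,\psi)$, while if $|c|<1$ the maximum principle pushes $g(\DD)$ into the open ball $\mathcal R_I$. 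Your version (``one of $f_1,f_2$ must vanish identically, forcing $f_1\equiv f_2\equiv 0$'') is not justified by anything you wrote and, as stated, does not follow from Lemma~\ref{lem:shilov}, which concerns discs in $\partial\EE$, not discs in $\EE$ whose lift touches $\partial\mathcal R_I$.
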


\begin{proof}

\emph{Step 1.} First consider the case when $\#f^{-1}(\mathcal T)=1.$ Since the group $\Aut(\EE)$ acts transitively on $\mathcal T,$ losing no generality we may assume that $f(0)=0.$ Then there are $n,m\in \mathbb N,$ $n+m>0,$ such that $$f= (\lambda^n g_1,\lambda^m g_2, \lambda^{n+m} g_3),$$ for some holomorphic $g=(g_1,g_2,g_3):\mathbb D\to \bar\EE,$ $g(0)\neq 0.$ Note that $g(\mathbb D\setminus\{0\})\cap \mathcal T=\varnothing$ and $\nu(g)(0)<\nu(f)(0).$

If $\nu(g)(0)=0$ (i.e. $g(0)\not\in\mathcal T$), then $g_1g_2-g_3$ does not vanish on $\mathbb D.$ Let $\tilde g$ be an analytic square root of $g_1g_2-g_3.$ Then the mapping $G=\left(\begin{array}{cc} g_1 & \tilde g \\ \tilde g & g_2 \\ \end{array} \right):\mathbb D\to \overline{\mathcal R_{II}}$ satisfies $g=\pi \circ G$.  Put $F(\lambda):=\left(\begin{array}{cc} \lambda^n g_1(\lambda)& \lambda^n \tilde g(\lambda) \\ \lambda^m \tilde g(\lambda) & \lambda^m g_2(\lambda) \\ \end{array}
\right),$ $\lambda\in \mathbb D.$ Clearly $F:\mathbb D\to \overline{\mathcal R_I}$ and $f=\pi\circ F$.

If $\nu(g)(0)\neq 0$, then $g(0)\in \mathcal T$ and, by Lemma~\ref{lem:shilov}, $g(\mathbb D)\subset \mathbb E$. Let $\varphi\in \Aut(\mathbb E)$ be such that $\varphi(g(0))=0.$ There is an analytic disc $h:\mathbb D\to \bar\EE$ such that $h(0)\neq 0$ and $$\varphi\circ g= (\lambda^{n_1} h_1,\lambda^{m_1} h_2, \lambda^{n_1+m_1} h_3),$$ $n_1,m_1\in \mathbb N,$ $n_1+m_1>0.$ In view of Remark~\ref{rem:nu} $$\nu(h)(0)<\nu(\varphi\circ g)(0)=\nu(g)(0)<\nu (f)(0).$$ If $\nu(h)(0)=0$ repeating the previous argument we find that there is a mapping $ H:\mathbb D\to \overline{\mathcal R_{II}}$ such that $h=\pi\circ H.$ Therefore, we may construct a mapping $G_1:\mathbb D\to \overline{\mathcal R_I}$ such that $\varphi\circ g=\pi \circ G_1.$ Making use of (\ref{eq:autsec}) we infer that $g=\pi\circ \widehat G$ for some analytic disc $\widehat G=(\widehat g_{ij})$ in $\overline{\mathcal R_I}.$ In particular, $f=\pi\circ F_1,$ where $F_1:\mathbb D\to \overline{\mathcal R_I}$ is given by the formula $F_1(\lambda)=\left(\begin{array}{cc} \lambda^{n_1} \widehat g_{11}(\lambda)& \lambda^{n_1} \widehat g_{12}(\lambda) \\ \lambda^{m_1} \widehat g_{21}(\lambda) & \lambda^{m_1} \widehat g_{22}(\lambda) \\ \end{array}\right),$ $\lambda\in \mathbb D.$ If $\nu (h)(0)>0$ we repeat the above procedure (until $\nu =0$).

\emph{Step 2.} In the case when $f^{-1}(\mathcal T)$ is finite it is sufficient to apply the procedure from the previous step to every point of $f^{-1}(\mathcal T)$.

\emph{Step 3.} Now consider the case when that $f^{-1}(\mathcal T)$ is infinite. If $f(\mathbb D)\subset \mathcal T,$ the statement is clear.
In the other case applying \textit{Step 2} to the family of analytic discs $f|_{(1-1/n)\mathbb D},$ $n\in\mathbb N,$ we find that there are holomorphic mappings $g_n:(1-1/n)\mathbb D\to \overline{\mathcal R_I}$ such that $$f\equiv \pi\circ g_n\quad \text{on}\ (1-1/n)\mathbb D.$$ Taking a limit of a subsequence we get a holomorphic $g:\mathbb D\to \overline{\mathcal R_I}$ such that $$f\equiv\pi\circ g,$$ which finishes the proof of the first assertion.

To prove the second statement assume without loss of generality that $f(0)=0.$ Note that $g(0)=\left(\begin{array}{cc} 0 & 0 \\ c & 0 \\ \end{array} \right)$ or $g(0)=\left(\begin{array}{cc} 0 & c \\ 0 & 0 \\ \end{array} \right)$  for some $c\in \bar\DD.$ If $c\in \partial \mathbb D$ we deduce that $f=(0,0,\psi)$ for some holomorphic mapping $\psi$.
In the case when $c$ lies in the unit disc it is clear that $g(0)\in \mathcal R_I$, whence $g(\mathbb D)\subset \mathcal R_I.$
\end{proof}

Recall that any complex geodesic in $\mathcal R_I$ passing through the origin is of the form \begin{equation}\mathbb D\ni \lambda\to U \left(\begin{array}{cc} \lambda & 0 \\ 0 & Z(\lambda) \\ \end{array} \right)V\in \mathcal R_I,\end{equation} where $U$, $V$ are unitary matrices and $Z:\mathbb D\to \mathbb D$ is a holomorphic mapping such that either $Z(\lambda)=\lambda$ or $|Z(\lambda)|<|\lambda|$ for $\lambda\in \mathbb D\setminus\{0\}$  (see \cite{Abate}).

If $f$ is an extremal function in the tetrablock and $g:\mathbb D\to \mathcal R_I$ is any holomorphic mapping covering $f$ (i.e. $\pi\circ g=f$), then $g$ is extremal as well. This simple observation together with Lemma~\ref{lem:lift} and the description of complex geodesics of the classical Cartan domain of the first type lead to the following statement which is of key importance for our considerations:
\begin{corollary}\label{cor:lift} If $f:\mathbb D\to \mathbb E$ is an extremal mapping such that $f(0)=0$, then either $f(\mathbb D)\subset \mathcal T$ or $f(\lambda)=(0,0,e^{i\theta}\lambda)$ or there are unitary matrices $U,$ $V$ and there is $c\in \mathbb D$ such that \begin{equation}\label{eq:geo}f(\lambda)=\pi(\varphi_c(U\left(\begin{array}{cc} \lambda & 0 \\ 0 & Z(\lambda) \\ \end{array} \right)V)),\end{equation} where $\varphi_c$ is an automorphism of the Cartan domain of the first type given by the formula (\ref{eq:phic}) and $Z:\mathbb D\to \mathbb D$ is a holomorphic mapping. Moreover $|Z(\lambda)|<|\lambda|$, $\lambda\in\mathbb D\setminus\{0\},$ or $Z(\lambda) =\lambda,$ $\lambda\in \mathbb D.$
\end{corollary}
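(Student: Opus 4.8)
The plan is to deduce Corollary \ref{cor:lift} directly from Lemma \ref{lem:lift} together with the quoted description of complex geodesics of $\mathcal R_I$ through the origin. First I would take an extremal $f:\DD\to\EE$ with $f(0)=0$ and apply Lemma \ref{lem:lift}. If $f(\DD)\subset\mathcal T$ we are in the first alternative and there is nothing to prove, so assume $f^{-1}(\mathcal T)\neq\varnothing$ but $f(\DD)\not\subset\mathcal T$. The lemma then produces a holomorphic lift $F:\DD\to\overline{\mathcal R_I}$ with $f=\pi\circ F$, and exactly one of the possibilities (a), (b) holds. In case (b) we have $f(\lambda)=\varphi((0,0,\psi(\lambda)))$ for an automorphism $\varphi$ of $\EE$ and a holomorphic $\psi:\DD\to\DD$; composing with $\varphi^{-1}$ and using that $\varphi^{-1}(0)$ must again be $f(0)$ after an additional automorphism (or arguing directly that $f(0)=0$ forces $\psi(0)=0$ up to composing with a disc automorphism) reduces the situation to $f(\lambda)=(0,0,e^{i\theta}\lambda)$; here one uses that $\lambda\mapsto(0,0,\lambda)$ is itself an extremal (indeed a complex geodesic of $\EE$, being the image under $\pi$ of a diagonal geodesic of $\mathcal R_I$) and that extremality of $f$ forces $\psi$ to be a rotation.

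In case (a) the lift $F$ maps into $\mathcal R_I$ itself, and by the observation preceding the corollary (a holomorphic map covering an extremal is again extremal) $F$ is an extremal, hence a complex geodesic, of $\mathcal R_I$ since $\mathcal R_I$ is convex and hence enjoys the Lempert property. Moreover $F(\DD)$ contains a point of $\pi^{-1}(0)$; indeed $f(0)=0$ gives $F(0)\in\pi^{-1}(0)\cap\mathcal R_I$, which by the explicit form of $\pi$ means $F(0)$ has vanishing diagonal entries, so $F(0)=\tilde c$ or its transpose for some $c\in\DD$. Applying the automorphism $\varphi_c=\Phi_{\tilde c}$ of $\mathcal R_I$ (or $\Phi_{\tilde c^{\,t}}$ in the transposed case, which only swaps the roles of the off-diagonal entries) we obtain a complex geodesic $\varphi_c\circ F$ of $\mathcal R_I$ passing through the origin; the quoted normal form then gives unitaries $U,V$ and a holomorphic $Z:\DD\to\DD$ with $Z(\lambda)=\lambda$ or $|Z(\lambda)|<|\lambda|$ for $\lambda\in\DD\setminus\{0\}$, such that $\varphi_c\circ F(\lambda)=U\left(\begin{smallmatrix}\lambda&0\\0&Z(\lambda)\end{smallmatrix}\right)V$. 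Inverting, $F(\lambda)=\varphi_c^{-1}(U\left(\begin{smallmatrix}\lambda&0\\0&Z(\lambda)\end{smallmatrix}\right)V)$, and since $\varphi_c$ is an involution up to an automorphism ($\Phi_{\tilde c}\circ\Phi_{-\tilde c}=\id$, and $-\tilde c$ is again of the form $\tilde c'$), absorbing $\varphi_c^{-1}$ into a relabelled $\varphi_{c'}$ and pushing forward by $\pi$ yields precisely the formula \eqref{eq:geo}.

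The one technical point needing care is the reconciliation of the two forms of the degenerate factor $\tilde c$ versus $\tilde c^{\,t}$ coming from the two shapes of $g(0)$ in Lemma \ref{lem:lift}: one should check that replacing $F$ by $\lambda\mapsto F(\lambda)^t$ (which is harmless since $\mathcal R_I$ is invariant under transposition and $\pi$ is unaffected, as $\det$ and the diagonal are transpose-invariant) interchanges the two cases, so that it suffices to treat $g(0)=\tilde c$. With that normalization the computation is the routine one just sketched.

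I expect the main obstacle to be purely bookkeeping rather than conceptual: namely, tracking how the various automorphisms compose — the automorphism $\varphi$ from Lemma \ref{lem:lift}(b), the Cartan automorphism $\varphi_c$ used to center the lifted geodesic at the origin, and the automorphism relating $\Aut(\EE)$ to $\Aut(\mathcal R_I)$ via \eqref{eq:autsec} — and verifying that, after all cancellations, one lands exactly on the stated normal form \eqref{eq:geo} with a single $\varphi_c$, a single pair of unitaries, and the dichotomy $Z(\lambda)=\lambda$ or $|Z(\lambda)|<|\lambda|$ intact. No new analytic input beyond Lemma \ref{lem:lift}, the convexity (hence Lempert property) of $\mathcal R_I$, and the classical classification of geodesics of $\mathcal R_I$ through the origin is required.
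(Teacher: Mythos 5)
Your argument is correct and follows essentially the same route the paper intends: deduce the corollary from Lemma~\ref{lem:lift} together with the observation that any lift of an extremal through $\pi$ is again extremal (hence, by convexity of $\mathcal R_I$, a complex geodesic), center the lift at the origin with $\varphi_c$ (transposing first if $F(0)$ is of the upper-triangular form, which is harmless since $\pi$ and the norm are transpose-invariant), and invoke the classification of geodesics of $\mathcal R_I$ through the origin. The only slip is the parenthetical claim that $\lambda\mapsto(0,0,\lambda)$ is the $\pi$-image of a (anti)diagonal geodesic of $\mathcal R_I$ --- in fact no holomorphic disc in the open domain $\mathcal R_I$ covers it --- but it is nevertheless a complex geodesic of $\EE$ because $z\mapsto z_3$ is a left inverse, so your conclusion in case (b) that extremality forces $\psi$ to be a rotation still stands.
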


\begin{lemma}\label{lem:boundofE} Let $v=(v_{ij})\in \partial\mathcal R_{I}.$ If $\pi (v)\in \partial \mathbb E,$ then $|v_{12}|=|v_{21}|.$
\end{lemma}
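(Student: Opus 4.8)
The plan is to translate both hypotheses into algebraic relations among the entries $v_{ij}$ and then eliminate the one real quantity they have in common. First I would record what $v\in\partial\mathcal R_I$ says: writing the singular values of $v$ as $\sigma_1\ge\sigma_2$, membership in $\overline{\mathcal R_I}$ gives $\sigma_1\le 1$, and $v\in\partial\mathcal R_I$ means $\sigma_1=1$, i.e. $(\sigma_1^2-1)(\sigma_2^2-1)=0$; since $\sigma_1^2\sigma_2^2=|\det v|^2$ and $\sigma_1^2+\sigma_2^2=\sum_{i,j}|v_{ij}|^2$, this is exactly
\begin{equation*}
|\det v|^2=|v_{11}|^2+|v_{12}|^2+|v_{21}|^2+|v_{22}|^2-1. \tag{$\star$}
\end{equation*}
Next, with $\pi(v)=(v_{11},v_{22},v_{11}v_{22}-v_{12}v_{21})$ one computes directly that
\begin{equation*}
z_1z_2-z_3=v_{12}v_{21},\qquad z_2-\bar z_1z_3=v_{22}(1-|v_{11}|^2)+\bar v_{11}v_{12}v_{21},\qquad |z_1|^2=|v_{11}|^2,
\end{equation*}
and since $\pi(v)\in\partial\mathbb E$ — so $\pi(v)\notin\mathbb E$ while $\pi(v)=\lim z^{(k)}$ for some $z^{(k)}\in\mathbb E$ — continuity of the defining function forces the inequality \eqref{eq:tetrablock} to hold with equality at $\pi(v)$:
\begin{equation*}
\bigl|\,v_{22}(1-|v_{11}|^2)+\bar v_{11}v_{12}v_{21}\,\bigr|+|v_{12}|\,|v_{21}|+|v_{11}|^2=1.
\end{equation*}

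If $|v_{11}|=1$ the conclusion is immediate: from $\|v\|\le 1$ the first column and first row of $v$ have no off-diagonal mass, so $v_{12}=v_{21}=0$. Assume then $|v_{11}|<1$. I would isolate the modulus in the last displayed equation, square both sides (the right-hand side $1-|v_{11}|^2-|v_{12}||v_{21}|$ is nonnegative), and divide through by $1-|v_{11}|^2>0$. This expresses $\re\!\bigl(\overline{v_{11}v_{22}}\,v_{12}v_{21}\bigr)$ as an explicit polynomial in the four moduli $|v_{11}|^2,|v_{12}|^2,|v_{21}|^2,|v_{22}|^2$ together with $|v_{12}|\,|v_{21}|$. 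On the other hand, expanding $|\det v|^2=|v_{11}v_{22}-v_{12}v_{21}|^2$ and invoking $(\star)$ yields a second formula for the same real part. Equating the two formulas and simplifying, everything cancels except $|v_{12}|^2+|v_{21}|^2=2|v_{12}|\,|v_{21}|$, that is $\bigl(|v_{12}|-|v_{21}|\bigr)^2=0$, which is the claim.

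The computation itself is routine once it is set up; the only place that needs a moment's care is the step passing from ``$\pi(v)\in\partial\mathbb E$'' to equality in \eqref{eq:tetrablock}, which uses only that $\mathbb E$ is the open set cut out by that inequality and that the defining function is continuous. The single non-mechanical observation is that $\re\!\bigl(\overline{v_{11}v_{22}}\,v_{12}v_{21}\bigr)$ is the common quantity carried by both boundary equations, so that eliminating it collapses the two identities to $|v_{12}|=|v_{21}|$; I do not expect any genuine obstacle beyond keeping the bookkeeping straight.
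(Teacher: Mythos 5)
Your proof is correct, but it follows a genuinely different route from the paper's. You translate both hypotheses into identities: $v\in\partial\mathcal R_I$ gives the singular-value relation $|\det v|^2=\sum_{i,j}|v_{ij}|^2-1$, and $\pi(v)\in\partial\EE$ forces equality in \eqref{eq:tetrablock}, after which you eliminate the common cross term $\re\bigl(\overline{v_{11}v_{22}}\,v_{12}v_{21}\bigr)$. I checked that the elimination does close: squaring the boundary equality (your nonnegativity remark justifies this), multiplying the singular-value identity by $1-|v_{11}|^2$ and subtracting, all terms cancel except $|v_{12}|^2+|v_{21}|^2-2|v_{12}||v_{21}|$, giving $\bigl(|v_{12}|-|v_{21}|\bigr)^2=0$; your separate treatment of the degenerate case $|v_{11}|=1$ is also fine. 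The paper argues instead by contradiction via a symmetrization trick: if $|v_{12}|\neq|v_{21}|$, replace $v$ by $\tilde v$ with the same diagonal and both off-diagonal entries equal to a square root of $v_{12}v_{21}$; then $\pi(\tilde v)=\pi(v)$ and $\det\tilde v=\det v$, while $\tr(\tilde v\tilde v^*)<\tr(vv^*)$, so the formula $\|x\|^2=\tfrac12\bigl(t+\sqrt{t^2-4|\det x|^2}\bigr)$, $t=\tr(xx^*)$, yields $\|\tilde v\|<\|v\|=1$ and hence $\pi(v)=\pi(\tilde v)\in\pi(\mathcal R_I)=\EE$, contradicting $\pi(v)\in\partial\EE$. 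The paper's argument is shorter and never invokes the explicit defining function of $\EE$, only the fact that $\EE$ is the image of the open ball $\mathcal R_I$ together with the spectral-radius formula for $2\times2$ matrices; yours is self-contained at the level of the defining inequality and makes the boundary equality explicit, at the cost of a longer, though mechanical, computation.
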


\begin{proof} Seeking a contradiction suppose that $|v_{12}|\neq |v_{21}|.$ Put $\tilde v=\left( \begin{array}{cc} v_{11} & w \\ w & v_{22} \\ \end{array}
\right),$ where $w$ is an arbitrary square root of $v_{12}v_{21}.$ Note that it would suffice to show that \begin{equation}\label{eq:dag}||\tilde v||<||v||.\end{equation} Actually, since $\pi(\tilde v)=\pi (v)$ and $||v||=1$, the inequality~(\ref{eq:dag}) would imply that $\pi(v)\in \mathbb E.$

Let us denote $\rho:=||v||^2=\rho (vv^*)$ and $\tilde{\rho}:=||\tilde v||^2=\rho (\tilde v \tilde v^*).$ Put $d:=\det v=\det \tilde v,$ $t:=\tr (vv^* ) = |v_{11}|^2+|v_{12}|^2+|v_{21}|^2+|v_{22}|^2$ and $\tilde t:= \tr (\tilde v\tilde v^*)= |v_{11}|^2+2|v_{12}||v_{21}|+|v_{22}|^2.$ It is clear that $\tilde t<t.$

Since $\rho=1/2(t+\sqrt{t^2-4d})$ and $\tilde{\rho}=1/2(\tilde t+\sqrt{\tilde t^2-4d}),$ we find that $\tilde{\rho}< \rho,$ which proves (\ref{eq:dag}).
\end{proof}

\begin{proof}[Proof of Theorem~\ref{thm:1} in the case $f(\mathbb D)\cap \mathcal T\neq \varnothing$] 

Let $f$ be an extremal mapping in the tetrablock such that the image of $f$ intersects the set of triangular points. We lose no generality assuming that $f(0)=0.$ Let $\tau,\sigma \in \mathbb D$, $\tau\neq \sigma$ be such that $f$ is extremal for $(f(\tau),f(\sigma)).$ We aim at showing that $f$ is a complex geodesic.

If $f(\lambda)=(0,0,e^{i\theta}\lambda)$, $\lambda\in\mathbb D,$ the statement is clear. The case $f(\mathbb D)\subset \mathcal T$ follows from \cite{Abo 2007}, Corollary~6.9. Therefore, using Corollary~\ref{cor:lift}, we may assume that $f$ is of the form (\ref{eq:geo}).

First we consider the case when $Z(\lambda)=\lambda,$ $\lambda\in \mathbb D.$ Then $f(\lambda)=\pi(\varphi_c(W\lambda)),$ $\lambda\in \mathbb D,$ where $W=UV$ is unitary. Making use of the formula \eqref{eq:phic} we find that $f=(\alpha,\omega\alpha,\gamma),$ where $\alpha(\lambda)=\sqrt{1-|c|^2}w_{11}\lambda/(1-\bar c \lambda w_{21})$, $\gamma(\lambda)=(\det W\lambda^2+c\lambda w_{12})/(1-\bar c \lambda w_{21})$, $\lambda\in \DD,$ and $\omega\in \partial \DD$ is such that $w_{22}=\omega w_{11}.$ Since the tetrablock is $(0,1,1)$-balanced we may assume that $\omega=1.$

The descriptions \eqref{eq:tetrablock2} of the tetrablock and \eqref{eq:sym} of the symmetrized bidisc give us the embedding $$\GG_2\owns(s,p)\mapsto (s/2,s/2,p)\in\EE.$$ Since $f$ is extremal, one can see that $\tilde f:=(2\alpha,\gamma)$ is extremal in $\GG_2.$ Therefore, it follows from \cite{Agl-You 2004} that $\tilde f$ is a geodesic in $\GG_2$ and its left inverse is given by $$F_a(s,p)=\frac{2ap-s}{2-as},\quad (s,p)\in\GG_2,$$ for some $a\in \partial \DD.$ Put $$\Psi_z(x):=\frac{zx_3-x_1}{1-zx_2}\quad x\in \EE,$$ where $z\in \bar\DD,$ and recall that $|\Psi_z|<1$ on $\EE$ whenever $z\in \bar\DD$ (see \cite{AWY}, Theorem~2.1).  It follows from the above considerations that $\Psi_a(f(\lambda))=F_a(\tilde f(\lambda))=\lambda$, $\lambda\in \DD$, so $\Psi_a$ is a left inverse of $f$, whence $f$ is a complex geodesic.

Now we focus on the case when $|Z(\lambda)|<|\lambda|$ for $\lambda\in \mathbb D\setminus\{0\}$ and $c\neq 0.$ It is seen that there is an open neighborhood $D$ of $\bar\DD$ and a holomorphic, non-rational mapping $W:D\to \mathbb C$ such that $W(\mathbb D)\subset \DD,$ $W(\tau)=Z(\tau)$ and $W(\sigma)=Z(\sigma)$ (note that we do not demand $W(0)=0$).

Put $g(\lambda)=\pi(\varphi_c(U\left(\begin{array}{cc} \lambda & 0 \\ 0 & W(\lambda) \\ \end{array} \right)V)),$ $\lambda\in \mathbb D.$ Then $g$ is also an extremal function in the tetrablock (as $g(\sigma)=f(\sigma),$ $g(\tau)=f(\tau)$). Therefore $g$ is almost proper, that is $g^*(\lambda)\in \partial \mathbb E$ ($g^*$ denotes a nontangential limit of the mapping $g$) for almost all $\lambda\in \partial\mathbb D$ w.r.t the Lebesgue measure on the unit circle (see e.g. \cite{Edi-Klis 2010}). Since $g$ is holomorphic in a neighborhood of $\bar \DD$, the almost properness means that $g(\partial \mathbb D)\subset \partial \mathbb E$. 

It follows from Lemma~\ref{lem:boundofE} that $$|\lambda u_{21} v_{11}+ W(\lambda) u_{22}v_{21}-c| = |\lambda u_{11} v_{12}+ W(\lambda) u_{12}v_{22}+\overline c\lambda W(\lambda) \det U \det V|,$$ $\lambda\in \partial\mathbb D.$ We claim that there are finite Blaschke products $B_1$, $B_2$ such that $|B_1(0)|+|B_2(0)|\neq 0$ and 
\begin{equation}\label{eq:B1B2} B_1(\lambda)(\lambda u_{21} v_{11}+ W(\lambda) u_{22}v_{21}-c) = B_2(\lambda)(\lambda u_{11} v_{12}+ W(\lambda) u_{12}v_{22}+\overline c\lambda W(\lambda)e^{i\theta}),\end{equation} $\lambda\in \mathbb D,$ where $e^{i\theta}=\det U\det V.$ 

To prove the existence of such Blaschke products take any $f_1,f_2\in \mathcal O(D)$ not vanishing identically and satisfying $|f_1|=|f_2|$ on $\partial \mathbb D$. Put $H(\lambda):=(\lambda-\lambda_1)\cdots (\lambda-\lambda_N)$, $\lambda\in D$, where $\lambda_1,\ldots, \lambda_N$ are common roots of $f_1$ and $f_2$ lying in $\bar\DD$ and counted with multiplicities. Since $|f_1|=|f_2|$ on $\partial \mathbb D$ we see that $f_1/H$ and $f_2/H$ do not vanish on $\partial \mathbb D.$ Therefore, there are finite Blaschke products $\tilde B_i,$ $i=1,2,$ with no common zeros such that that $F_i:=f_i/(H\tilde B_i)$ is holomorphic on a neighborhood of $\bar\DD$ and does not vanish there, $i=1,2$. Since $|\tilde B_i|=1$ on $\partial \mathbb D$ we get that $|F_1|=|F_2|$ on $\partial \mathbb D.$ From this we immediately get that $F_2/F_1$ is constant -- apply the maximum principle to $F_1/F_2$ and $F_2/F_1$. Let $F_2=\omega F_1,$ where $|\omega|=1$. Then $f_1=F_1 H \tilde B_1,$ $F_2=\omega F_1 H \tilde B_2,$ and $\tilde B_1,$ $\tilde B_2$ have no common zeros. Putting $B_1:=\omega\tilde B_2$ and $B_2:= \tilde B_1$ we obtain Blaschke products satisfying the desired claim.

Since $W$ is non-rational we infer that
\begin{align}\label{eq:B} B_1(\lambda)\lambda u_{21}v_{11}-cB_1(\lambda)&=B_2(\lambda)\lambda u_{11}v_{12},\\\nonumber
B_1(\lambda) u_{22} v_{21} &= B_2(\lambda) u_{12}v_{22}+ B_2(\lambda)\overline c\lambda e^{i\theta}, \; \lambda\in\bar\DD.
\end{align} Putting $\lambda=0$ we get $B_1(0)=0$. Since $B_2(0)\neq 0$ we get that $u_{12}v_{22}=0.$

If $u_{12}=0,$ then $u_{21}=0$ and $|u_{11}|=|u_{22}|=1$. Putting it to \eqref{eq:B} and taking $|\lambda|=1$ we find that $|v_{12}|=|c|.$ Easy computations give: $|v_{11}|=|v_{22}|=\sqrt{1-|c|^2}$ and $|v_{21}|=|c|.$ We want to show that $f$ is a complex geodesic. Note that making use of the fact that the tetrablock is $(1,0,1)$- and $(0,1,1)$-balanced it suffices to get the statement under the additional assumption that $u_{11}=u_{22}=1$.  Using similar argument we see that it is enough to consider that case $v_{11}=\sqrt{1-|c|^2}$ and $v_{12}=|c|.$ Then $$V=\left(\begin{array}{cc}\sqrt{1-|c|^2}& |c|\\ -\omega |c|&\omega \sqrt{1-|c|^2}\end{array}\right)$$ for some $\omega$ from the unit circle. Replacing $Z$ with $\omega Z$ we may clearly assume that $\omega=1.$ Now, after some simple calculations one can deduce that $$f(\lambda)=\left(\frac{(1-|c|^2)\lambda}{1+\bar c|c|Z(\lambda)},\frac{(1-|c|^2)Z(\lambda)}{1+\bar c|c|Z(\lambda)}, \lambda\frac{Z(\lambda)+c|c|}{1+\bar c |c|Z(\lambda)}\right),\quad \lambda\in \DD.$$ Therefore $f$ is a complex geodesic (it just of the form (2) in Theorem~2 in \cite{EZ}, with permuted two first variables, $\omega_2=1$, $\omega_1\in \partial\DD$ such that $c=-\omega_1 |c|,$ $C=|c|^2$ and $\varphi(\lambda)=(\bar\omega_1 Z(\lambda)-|c|^2)/(1-|c|^2\bar\omega_1 Z(\lambda))$). If $v_{22}=0$ we proceed similarly.

Let us focus on the case $c=0$ and $|Z(\lambda)|<|\lambda|,$ $\lambda\in \mathbb D\setminus\{0\}.$ First note that we may assume that $Z$ extends holomorphically to a neighborhood of $\DD.$ Let $h$ be a holomorphic function in a heighborhood of $\DD$ such that $Z(\lambda)=\lambda h(\lambda)$, $h(\DD)\subset\DD$. Replacing $Z$ with a non-rational $W:\mathbb D\to \mathbb D$ (holomorphic on a neighborhood of $\bar\DD$) such that $W(\sigma)=Z(\sigma)$ and $W(\tau)=Z(\tau)$, making use of Lemma~\ref{lem:boundofE} and repeating the argument with Blaschke products \eqref{eq:B} we find that $|u_{21}v_{11}|=|u_{11}v_{12}|$ and $|u_{22}v_{21}|=|u_{12}v_{22}|.$ Since $U$ and $V$ are unitary we deduce from these equalities that 
\begin{equation}\label{eq:reluv} |u_{ij}|=|v_{ij}|,\quad 1\leq i,j\leq 2.\end{equation} Put $$\Phi(\lambda):=U\left(\begin{array}{cc}\lambda & 0\\ 0& Z(\lambda)\end{array}\right)V=\left(\begin{array}{cc}\lambda u_{11}v_{11}+Z(\lambda)u_{12}v_{21}& \lambda u_{11}v_{12}+Z(\lambda)u_{12}v_{22}\\ \lambda u_{21}v_{11}+Z(\lambda)u_{22}v_{21} &\lambda u_{21}v_{12}+Z(\lambda)u_{22}v_{22}\end{array}\right)$$ for $\lambda$ lying in some neighborhood of $\bar\DD$. Obviously $\Phi(\mathbb D)\subset\mathcal R_I$. Define \begin{equation}\label{eq:psi1}\Psi(\lambda)=\left(\begin{array}{cc}\lambda u_{11}v_{11}+Z(\lambda)u_{12}v_{21}&  \lambda^2 u_{11}v_{12}+\lambda Z(\lambda)u_{12}v_{22}\\ u_{21}v_{11}+h(\lambda)u_{22}v_{21} &\lambda u_{21}v_{12}+Z(\lambda)u_{22}v_{22}\end{array}\right)\end{equation} for $\lambda$ from some neighborhood of $\bar\DD$. Note that $||\Psi(\lambda)||=1$ for $\lambda\in \partial \DD$ (as $\Psi\Psi^*$ and $\Phi\Phi^*$ have the same eigenvalues on $\partial \mathbb D$) and $\Psi(0)\in \mathcal R_I$. A standard argument implies that $\Psi$ maps the unit disc into $\mathcal R_I$ (apply the maximum principle to the subharmonic function $\log||\Psi(\cdot)||$). Observe that $f=\pi\circ\Psi$, whence $\Psi$ is a complex geodesic in $\mathcal R_I,$ as well. Denote $c'=-u_{21}v_{11}-h(0)u_{22}v_{21}.$ If $c'=0$ then $u_{21}v_{11}=0$ (remember that $|h(0)|<1$ and use the equality $|u_{21}v_{11}|=|u_{22}v_{21}|$), whence $U$ and $V$ are diagonal or anti-diagonal. Then, it is easy to observe that $f$ is a complex geodesic (more precisely,  up to a permutation of two fist components the mapping $f$ is of the form $f(\lambda)=(\omega_1\lambda,\omega_2 Z(\lambda), \omega_1\omega_2 Z(\lambda)),$ $\lambda\in \mathbb D$, for some $\omega_1,\omega_2\in \partial \mathbb D$).

If $c'\neq 0$, then $u_{21}v_{11}\neq0$. Moving $\Psi(0)$ to the origin and making use of the description of complex geodesics in $\mathcal R_I$ we infer that there are unitary matrices $U_1,V_1$ and a mapping $Z_1$ defined on $\DD$,
such that \begin{equation}\label{eq:psi2}\Psi(\lambda)=\varphi_{c'}(U_1\left(\begin{array}{cc}\lambda& 0\\ 0&Z_1(\lambda)\end{array}\right)V_1),\quad \lambda\in \mathbb D\end{equation} and $|Z_1(\lambda)|<|\lambda|$ for $\lambda\in \mathbb D\setminus\{0\}$ or $Z_1(\lambda)=\lambda,$ $\lambda\in \DD.$ Now we are in a position that allows us to apply the cases already solved (either $Z(\lambda)=\lambda$ for $\lambda\in \DD$ or $c\neq 0$ and $|Z(\lambda)|<|\lambda|$ for $\lambda\in \DD\setminus \{0\}$).
\end{proof}

\section{Proof of Theorem 1 -- the case $f(\DD)\cap\mathcal T=\varnothing$.}

Let $f:\DD\to\EE$ be an extremal such that $f(\DD)\cap
\mathcal T=\varnothing$. Then there exists a geodesic $\widetilde
f:\DD\to \mathcal R_{II}$ such that $f=\pi\circ\widetilde f$. Making use of the form of automorphisms of $\EE$
without loss of generality we may assume that
$f$ is a $\tilde k_{\EE}$-extremal for $(f(0),f(\sigma))$ and $f(0)=(0,0,-\beta^2)$.

Any complex geodesic in $\mathcal R_{II}$ passing through the origin can be
written as
\begin{equation}
\phi(\lambda)=
U
\left(
  \begin{array}{cc}
    \lambda & 0 \\
    0 & Z(\lambda) \\
  \end{array}
\right)
 U^t,
\end{equation}
where $U$ is a unitary matrix and $Z:\DD\to\DD$ is a
holomorphic mapping such that $Z(0)=0$. Moreover,
$|Z(\lambda)|<|\lambda|$, $\lambda\in\DD\setminus\{0\}$, or
$Z(\lambda)=\lambda$ (see \cite{Abate}). Assume that
\begin{equation}
U=
\left(
  \begin{array}{cc}
    a & b \\
    c & d \\
  \end{array}
\right),
\end{equation}
where $|a|^2+|b|^2=|c|^2+|d|^2=1$ and $a\overline c+b\overline
d=0$. After some simple calculations we get
\begin{equation}
\phi(\lambda)=
\left(
  \begin{array}{cc}
    a^2\lambda+b^2 Z(\lambda) & ac\lambda+bd Z(\lambda) \\
    ac\lambda+bd Z(\lambda) & c^2\lambda+d^2 Z(\lambda) \\
  \end{array}
\right).
\end{equation}
Put $A(\lambda)=a^2\lambda+b^2 Z(\lambda)$,
$B(\lambda)=ac\lambda+bd Z(\lambda)$, and $C(\lambda)=c^2+d^2
Z(\lambda)$. We "move" now this geodesic to $\left(
  \begin{array}{cc}
    0 & \beta \\
    \beta & 0 \\
  \end{array}
\right)$ and get the following
\begin{prop}\label{prop:nonsing} Let $f:\DD\to\EE$ be an extremal mapping for
$(f(0),f(\sigma))$ such that $f(0)=(0,0,-\beta^2)$ and
$f(\DD)\cap \mathcal T=\varnothing$. Then there exist
$a,b,c,d\in\bar\DD$ with $|a|^2+|b|^2=|c|^2+|d|^2=1$ and
$a\overline c+b\overline d=0$ such that
\begin{equation}
f(\lambda)=\left(
\frac{A(\lambda)(1-\beta^2)}{\Delta(\lambda)},
\frac{C(\lambda)(1-\beta^2)}{\Delta(\lambda)},
\frac{A(\lambda)C(\lambda)-(B(\lambda)+\beta)^2}{\Delta(\lambda)}
\right),
\end{equation}
where $A,B,C$ are defined as above and
$\Delta(\lambda)=(1+\beta
B(\lambda))^2-A(\lambda)C(\lambda)\beta^2$.
\end{prop}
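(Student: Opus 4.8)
The plan is to reduce Proposition~\ref{prop:nonsing} to the
already-recalled description of complex geodesics of $\mathcal
R_{II}$ passing through the origin, by conjugating such a geodesic
with an automorphism of $\mathcal R_{II}$ that sends the origin to
$\bigl(\begin{smallmatrix} 0 & \beta \\ \beta & 0
\end{smallmatrix}\bigr)$ and then pushing everything down to $\EE$
via $\pi$. First I would invoke the fact that $\EE\setminus\mathcal
T$ is covered by $\pi$ restricted to $\mathcal
R_{II}\setminus\pi^{-1}(\mathcal T)$ (stated in the introduction):
since $f(\DD)\cap\mathcal T=\varnothing$, the extremal $f$ lifts
through this covering to a holomorphic $\widetilde f\colon\DD\to
\mathcal R_{II}$ with $\pi\circ\widetilde f=f$, and $\widetilde f$
is automatically an extremal of $\mathcal R_{II}$; by the Lempert
theorem for the convex domain $\mathcal R_{II}$ it is a complex
geodesic. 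Next I would normalize: the point $\widetilde
f(0)\in\mathcal R_{II}$ satisfies $\pi(\widetilde f(0))=f(0)=(0,0,-\beta^2)$, and one checks directly that a symmetric matrix $v$ with $\pi(v)=(0,0,-\beta^2)$ must have zero diagonal and off-diagonal entries of modulus $|\beta|$, so after multiplying $\widetilde f$ on the left and right by a suitable diagonal unitary (which descends to an automorphism of $\EE$ and may be absorbed, exactly as the statement allows via ``making use of the form of automorphisms of $\EE$'') we may assume $\widetilde f(0)=\bigl(\begin{smallmatrix} 0 & \beta \\ \beta & 0 \end{smallmatrix}\bigr)$ with $\beta\in[0,1)$.

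Then I would apply the automorphism $\Phi_{a_0}$ of $\mathcal
R_{II}$ with $a_0=\bigl(\begin{smallmatrix} 0 & \beta \\ \beta & 0
\end{smallmatrix}\bigr)$, recalled at the start of Section~3, which
carries $\widetilde f(0)$ to $0$. The composition
$\Phi_{a_0}\circ\widetilde f$ is then a complex geodesic of
$\mathcal R_{II}$ through the origin, hence of the recalled normal
form
\[
\phi(\lambda)=U\left(\begin{array}{cc}\lambda & 0\\ 0 &
Z(\lambda)\end{array}\right)U^t
\]
with $U$ unitary, $Z(0)=0$, and $|Z(\lambda)|<|\lambda|$ on
$\DD\setminus\{0\}$ or $Z\equiv\id$. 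Writing $U=\bigl(\begin{smallmatrix} a & b \\ c & d
\end{smallmatrix}\bigr)$ gives the entrywise formulas for
$A,B,C$ displayed just before the Proposition. Therefore
$\widetilde f=\Phi_{a_0}^{-1}\circ\phi=\Phi_{a_0}\circ\phi$ (since
$\Phi_{a_0}^{-1}=\Phi_{a_0}$ for this particular $a_0$, or in any case
another automorphism of the same simple type), and $f=\pi\circ
\widetilde f$ is obtained by a direct matrix computation: one
expands
\[
\Phi_{a_0}(\phi(\lambda))=(1-a_0a_0^*)^{-\frac12}(\phi(\lambda)-a_0)(1-a_0^*\phi(\lambda))^{-1}(1-a_0^*a_0)^{\frac12},
\]
uses $a_0a_0^*=a_0^*a_0=\beta^2 I$ so that the square-root factors
are $(1-\beta^2)^{\pm 1/2}I$, inverts the $2\times2$ matrix
$1-a_0^*\phi(\lambda)$ (whose determinant is exactly
$\Delta(\lambda)=(1+\beta B(\lambda))^2-A(\lambda)C(\lambda)\beta^2$),
and finally reads off the three entries $(x_{11},x_{22},\det x)$ of
the resulting symmetric matrix. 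This produces precisely the stated
triple, with the factor $1-\beta^2$ appearing from the two square-root
factors and $\Delta$ from the inverse.

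The only subtle point, and the main obstacle, is the normalization
step ensuring that after using automorphisms of $\EE$ (equivalently
of $\mathcal R_{II}$, via \eqref{eq:aut}) one may genuinely assume
$\widetilde f(0)=\bigl(\begin{smallmatrix} 0 & \beta \\ \beta & 0
\end{smallmatrix}\bigr)$ rather than merely being $\pi$-equivalent
to $(0,0,-\beta^2)$ up to automorphism; this requires knowing the
fibre $\pi^{-1}((0,0,-\beta^2))\cap\mathcal R_{II}$ explicitly and
checking it is a single orbit under the diagonal/anti-diagonal
unitary automorphisms, which is a short linear-algebra computation
(any symmetric $2\times2$ contraction with vanishing diagonal and
determinant $-\beta^2$ has the form $\bigl(\begin{smallmatrix} 0 &
\beta e^{i\theta} \\ \beta e^{i\theta} & 0
\end{smallmatrix}\bigr)$). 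Everything else is bookkeeping:
the existence of the lift and its extremality come from the covering
property and the Lempert theorem for $\mathcal R_{II}$ quoted above,
and the final formula is a determinant-and-inverse computation for a
$2\times 2$ matrix. I would keep the parameters $a,b,c,d$ of $U$
abstract (only using $|a|^2+|b|^2=|c|^2+|d|^2=1$, $a\bar c+b\bar
d=0$) so the statement is recorded in exactly the form needed for the
subsequent construction of a left inverse.
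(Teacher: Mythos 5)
Your proposal is correct and follows essentially the same route as the paper: lift $f$ through the covering $\pi|_{\mathcal R_{II}\setminus\pi^{-1}(\mathcal T)}$, note the lift is extremal and hence (by Lempert's theorem for the convex domain $\mathcal R_{II}$) a complex geodesic, normalize its value at $0$ to $\bigl(\begin{smallmatrix}0&\beta\\ \beta&0\end{smallmatrix}\bigr)$, reduce to Abate's normal form through the origin by the Möbius automorphism $\Phi_a$, and push down by $\pi$ to obtain the stated formula. Only two harmless slips: $\Phi_{a_0}^{-1}=\Phi_{-a_0}$ rather than $\Phi_{a_0}$ (the resulting sign change of $\beta$ is absorbed by replacing $(c,d)$ with $(-c,-d)$), and the factor $1-\beta^2$ comes from the matrix product $(\phi-a_0)(1-a_0^{*}\phi)^{-1}$ itself, since the scalar square-root factors $(1-\beta^2)^{\mp 1/2}I$ cancel.
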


We show that under the above assumptions the extremal $f$ has its left inverse. First note that the following equations are satisfied: $|a|=|d|$, $|b|=|c|$ and $|ad-bc|=1$. Note also that we may always assume that $Z(\lambda)=\mu\lambda$ for some $|\mu|\leq 1$. Actually, if the considered
mapping is extremal with some $Z$ as above then it will also be extremal with $Z(\lambda)=\mu\lambda$ where $\mu=Z^{\prime}(0)$. If the new considered extremal intersects $\mathcal T$ then in view of the previous considerations we already know that it is a complex geodesic. Therefore, we lose no generality assuming that the extremal omitting $\mathcal T$ is the one with $Z(\lambda)=\mu\lambda$.

We want to get some relations on the numbers $a,b,c,d$ and $Z$ (equivalently, $\mu$) that allow us to describe the mappings as in Proposition
\ref{prop:nonsing}.

When does the equality $f_1f_2=f_3$ hold at some point of $\DD$
(in other words we want to see when $f(\DD)\cap \mathcal T=\varnothing$)?

\begin{equation}
 \frac{AC(1-\beta^2)^2}{\triangle^2}=\frac{AC-(B+\beta)^2}{\triangle}
\end{equation}

which is equivalent to

\begin{equation}
 AC\beta=(B+\beta)(1+\beta B).
\end{equation}

Consequently,

\begin{equation}
 \beta\lambda Z(ad-bc)^2-(1+\beta^2)(\lambda ac+Zbd)-\beta=0.
\end{equation}

Recall that the Cohn criterion (see e.g. \cite{Rah 2002} ) gives that the equation $a_0\lambda^2+a_1\lambda+a_2=0$ has both solutions in $\CC\setminus\DD$
iff $|a_2|\ge|a_0|$ and $|\bar a_0a_1-a_2\bar a_1|\leq||a_0|^2-|a_2|^2|$.

When we apply it to our situation ($Z(\lambda)=\mu\lambda$) we get that $f$ is as desired iff

\begin{equation}
 (1+\beta^2)|\bar\mu(\bar a\bar d-\bar b\bar c)^2(ac+\mu bd)+(\bar a\bar c+\bar\mu\bar b\bar d)|\leq\beta(1-|\mu|^2).
\end{equation}

Then elementary calculations give that the last inequality (remember about the existing relations) is equivalent to
$|c||d|(1+\beta^2)\leq\beta$.

It is sufficient to show that we have the left inverse under the sharp inequality.

In view of Theorem \ref{thm:gen} it is sufficient to show that for some $\gamma\in\DD$ and $|\tau|=1$ the function
\begin{equation}
g:\DD\owns\lambda\mapsto(\tau\frac{\lambda-\gamma }{1- \bar\gamma\lambda }f_1(\lambda),f_2(\lambda),\tau\frac{\lambda-\gamma}{1-\bar\gamma\lambda}
f_3(\lambda))\in\EE
\end{equation}
is a geodesic.

Let $F(z):=\frac{z_3-z_2}{z_1-1}$, $z\in\EE$. We shall prove that by the proper choice of $\tau$ and $\gamma$
the function $h:=F\circ g$ is an automorphism of $\DD$. But it is sufficient, by the Schwarz-Pick Lemma
to show that $|h^{\prime}(0)|=1-|h(0)|^2$.

But $h(0)=-\tau\gamma\beta^2$ and

\begin{multline}
h^{\prime}(0)=\tau\beta^2(1-|\gamma|^2)-2\tau\beta\gamma(ac+bd\mu)(1-\beta^2)+\\ (c^2+d^2\mu)(1-\beta^2)+(c^2+d^2\mu)(1-\beta^2)+
\tau^2\gamma^2(a^2+b^2\mu)(1-\beta^2)\beta^2.
\end{multline}

Consequently,

\begin{equation}
h^{\prime}(0)=(1-\beta^2)((c-\tau\beta\gamma a)^2+\mu(d-b\tau\beta \gamma)^2)+\tau\beta^2(1-|\gamma|^2).
\end{equation}
We choose $|\tau|=1$ and $\gamma\in\DD$ such that $d=b\tau\beta\gamma$ and $|h^{\prime}(0)|=|(1-\beta^2)|c-\tau\beta\gamma a|^2+\beta^2(1-|\gamma|^2)$,
which is possible under the assumption $|d|^2<|b|^2\beta^2$, which is equivalent ($|b|=|c|$ and $|c|^2+|d|^2=1$) to
$\frac{1}{1+\beta^2}<|c|^2$. And the last inequality is equivalent to $|c|^2(1-|c|^2)<\frac{\beta^2}{(1+\beta^2)^2}$.

\section{$\EE$ cannot be exhausted by domains biholomorphic to convex ones}

In this Section we prove Theorem~\ref{thm:convex}.

For $z\in \mathbb C^3$ put $\rho(z):=\max ||(\pi|_{\mathcal R_{II}})^{-1}(z)||.$ The properness of $\pi|_{\mathcal R_{II}}$ implies that $\rho$ is plurisubharmonic.

\begin{proof}[Proof of Theorem~\ref{thm:convex}]
For any $\epsilon\in(0,1)$ we define
$G_{\epsilon}:=\{z\in\CC^3:\rho(z)<1-\epsilon\}$. Assume that
$U_{\epsilon}$ is a neighborhood of $\overline{G_{\epsilon}}$
and $f_{\epsilon}:U_{\epsilon}\mapsto V_{\epsilon}$ where
$V_{\epsilon}$ is a convex domain. Without loss of generality
we may assume that $0\in V_{\epsilon}$, $V_{\epsilon}$ is a
convex domain, $f_{\epsilon}(0)=0$,
$f_{\epsilon}^{\prime}(0)=\operatorname{id}$. Denote
$\phi_{\lambda}(z):=(\lambda z_1,\lambda z_2,\lambda^2 z_3)$,
$\lambda\in\CC$, $z\in \EE$.

Fix $w=(w_1,w_2,w_3),z=(z_1,z_2,z_3)\in\CC^3$ and $r\in[0,1]$.
Put

\begin{enumerate}
\item $R:=\max\{\rho(w),\rho(z)\}$,
\item
    $g_{\epsilon}(\lambda):=f_{\epsilon}^{-1}(rf_{\epsilon}(\phi_{\lambda}(w))+(1-r)f_{\epsilon}(\phi_{\lambda}(z)))$.
\end{enumerate}

Note that $g_{\epsilon}(0)=0$ and that $g_{\epsilon}$ is
well-defined for $|\lambda|<(1-\epsilon)/R$. Moreover,
$\rho(g_{\epsilon}(\lambda))\leq 1$ for any
$|\lambda|<(1-\epsilon)/R$. Put
$h_{\epsilon}(\lambda):=\phi_{1/\lambda}(g_{\epsilon}(\lambda))$.
Then
$h_{\epsilon}:\DD(0,(1-\epsilon)/R)\setminus\{0\}\mapsto\CC^3$
is a holomorphic mapping. Then simple calculations show the
following properties

\begin{enumerate}
\item $(g_{\epsilon})_j^{\prime}(0)=rw_j+(1-r)z_j,\; j=1,2,$
\item $(g_{\epsilon})_3^{\prime}(0)=0$.
\end{enumerate}

Consequently, $h_{\epsilon}$ extends holomorphically to $0$.
More calculations show that

\begin{multline}
(g_{\epsilon})_3^{\prime\prime}(0)=
2(rw_3+(1-r)z_3)+\frac{\partial^2(f_{\epsilon})_3}{\partial
z_1^2}(0)r(1-r)(w_1-z_1)^2 +\\
\frac{\partial^2(f_{\epsilon})_3}{\partial
z_2^2}(0)r(1-r)(w_2-z_2)^2+
2\frac{\partial^2(f_{\epsilon})_3}{\partial z_1\partial
z_2}(0)r(1-r)(w_1-z_1)(w_2-z_2).
\end{multline}

Define

\begin{equation}
s_{\epsilon}:=\frac{1}{2}\frac{\partial^2(f_{\epsilon})_3}{\partial z_1^2}(0),\; t_{\epsilon}:=
\frac{1}{2}\frac{\partial^2(f_{\epsilon})_3}{\partial z_2^2}(0),\;u_{\epsilon}:=\frac{\partial^2(f_{\epsilon})_3}{\partial z_1\partial z_2}(0).
\end{equation}

Then

\begin{multline}\label{mul:h0}
h_{\epsilon}(0)=(rw_1+(1-r)z_1,rw_2+(1-r)z_2,
rw_3+(1-r)z_3+s_{\epsilon}r(1-r)(w_1-z_1)^2+\\
t_{\epsilon}r(1-r)(w_2-z_2)^2+u_{\epsilon}r(1-r)(w_1-z_1)(w_2-z_2)).
\end{multline}

By the maximum principle \begin{equation}\label{eq:h0}
\rho(h_{\epsilon}(\lambda))=\rho(\phi_{1/\lambda}(g_{\epsilon}(\lambda))=\frac{1}{|\lambda|}\rho(g_{\epsilon}(\lambda))\leq\frac{1}{|\lambda|}.
\end{equation}

Hence, $\rho(h_{\epsilon}(0))\leq\frac{R}{1-\epsilon}$.

Our next aim is to show that

\begin{equation}\label{eq:stu}
\lim\sb{\epsilon\to 0}s_{\epsilon}=\lim\sb{\epsilon\to 0}t_{\epsilon}=\lim\sb{\epsilon\to 0}u_{\epsilon}=0.
\end{equation}

Note that the equalities (\ref{eq:stu}) imply that
$\rho(rw+(1-r)z)\leq\max\{\rho(w),\rho(z)\}$ for all
$w,z\in\CC^3$, which contradicts the non-convexity of
$\EE$.

We are just left with the proof of the above equalities.

Put $r=\frac{1}{2}$. For the proof of the convergence of $s_{\epsilon}$ consider two points
$w=(1,1,1)$,
$z=(-1,1,-1).$ Putting them to (\ref{mul:h0}) and (\ref{eq:h0}) we find that $\rho(0,1,s_{\epsilon})\leq 1/(1-\epsilon).$ This inequality implies that $s_\epsilon \to 0.$

Similarly, putting $w=(1,1,1)$ and $z=(1,-1,-1)$ one can show that $t_{\epsilon}\to 0.$

Finally, taking $z=(\zeta,\zeta,\zeta)$ and $w=(-\zeta,-\zeta,\zeta)$, where $|\zeta|=1$ is such that $u_{\epsilon}\zeta =|u_{\epsilon}|$ we find that $\rho(0,0 ,\zeta (1 + \zeta t_{\epsilon} + \zeta s_{\epsilon}+ |u_{\epsilon}|))< 1/(1-\epsilon).$ Making use of just proven two equalities we get the equality $\lim_{\epsilon\to 0} u_{\epsilon}=0.$
\end{proof}

\end{document}